%
%
%
%
%
\RequirePackage{fix-cm}
\documentclass[smallextended]{svjour3}       
\smartqed  
\usepackage{amsmath,amssymb}
\usepackage{latexsym}            
\usepackage{fixltx2e}
\usepackage{ulem}
\usepackage{epsfig,subfigure}
\usepackage{epstopdf}
\usepackage{amsfonts,amscd}
\usepackage[ruled,vlined]{algorithm2e}
\usepackage{color,comment}
\usepackage{amsbsy,bm}

\usepackage{scalerel,stackengine}
\newcommand\reallywidehat[1]{%
	\savestack{\tmpbox}{\stretchto{%
			\scaleto{%
				\scalerel*[\widthof{\ensuremath{#1}}]{\kern-.6pt\bigwedge\kern-.6pt}%
				{\rule[-\textheight/2]{1ex}{\textheight}}
			}{\textheight}%
		}{0.5ex}}%
	\ensurestackMath{\stackon[1pt]{#1}{\tmpbox}}%
}

\setlength\textwidth{6.5in} \setlength\oddsidemargin{0in}
\setlength\topmargin{0in} \setlength\textheight{9in}

\newcommand\sA{{\mathcal A}}
\newcommand\sB{{\mathcal B}}

\newcommand\sE{{\mathcal E}}

\newcommand\sN{{\mathcal N}}

\newcommand\sT{{\mathcal T}}

\newcommand\sV{{\mathcal V}}
\newcommand\sW{{\mathcal W}}

\newcommand\Bf{\mathbf{f}}
\newcommand\bK{\mathbf{K}}
\newcommand\bn{\mathbf{n}}
\newcommand\bq{\mathbf{q}}
\newcommand\br{\mathbf{r}}
\newcommand\bx{\mathbf{x}}
\newcommand\by{\mathbf{y}}
\newcommand\bv{\mathbf{v}}
\newcommand\bw{\mathbf{w}}
\newcommand\bsw{\boldsymbol{w}}

\newcommand\bF{\mathbf{F}}

\newcommand\bP{\mathbf{P}}
\newcommand\bS{\mathbf{S}}

\newcommand\bW{\mathbf{W}}

\newcommand\intT{\displaystyle{\int_{0}^{T}}}
\newcommand\sumK{\displaystyle{\sum_{\bK_h\in\G_h}}}
\newcommand\intK{\displaystyle{\int_{\bK_h}}}
\newcommand\intpK{\displaystyle{\int_{\partial \bK_h}}}
\newcommand\sume{\displaystyle{\sum_{e \in \mathcal{E}}}}

\def \be{\begin{equation}}
\def \ee{\end{equation}}
\def \beaa{\begin{eqnarray}}
\def \eeaa{\end{eqnarray}}
\def \bes{\begin{eqnarray}}
\def \ees{\end{eqnarray}}
\def \bns{\begin{eqnarray*}}
	\def \ens{\end{eqnarray*}}

\newcommand{\ki}{{\mbox{\raise.2ex\hbox{$\chi$}}\hspace{.2ex}}}

\newcommand{\G}{{\Gamma}}

\renewcommand{\tilde}{\widetilde}

\renewcommand{\bar}{\overline}
\renewcommand{\hat}{\widehat}

\newcommand{\at}{\m\text{ at }}

\newcommand{\m}{\hspace{1em}}

\newtheorem{thm}{Theorem}

\newtheorem{lma}[thm]{Lemma}
\newtheorem{defn}{Definition}
\newtheorem{rmk}{Remark}
%
%
%
%
%
\begin{document}

\title{   Local Discontinuous Galerkin Methods for Solving Convection-Diffusion and Cahn-Hilliard Equations on Surfaces
}


\author{Shixin Xu   \and  Zhiliang Xu  
}


\institute{Shixin Xu \at
	Duke Kunshan University,
              8 Duke Ave, Kunshan Shi, Suzhou Shi, Jiangsu Sheng, China, 215316\\
           \and
           Zhiliang Xu \at
              Department of Applied and Computational
              Mathematics and Statistics, University of Notre Dame, Notre Dame, IN
              46556\\
               \email{zxu2@nd.edu}\\
}

\date{Received: date / Accepted: date}

\maketitle

\begin{abstract}
Local discontinuous Galerkin methods are developed for solving \textcolor{blue}{second order and fourth order} time-dependent partial differential equations defined on \textcolor{blue}{static 2D manifolds}. These schemes are  second-order accurate with surfaces triangulized by planar triangles and careful design of numerical fluxes. The schemes are proven to be energy stable.
Various numerical experiments are provided to validate the new schemes.
\keywords{Local discontinuous Galerkin \and Manifold \and Planar triangles}
\end{abstract}

\section{Introduction}

Numerically solving partial differential equations (PDEs) defined on manifolds has drawn  a lot of attention recently. This is because models of many applications, such as surface diffusion, cell membrane deformation, imaging processing and geophysical
problems are formulated as second- or fourth-order partial differential equations  on arbitrary surfaces or 2D Riemannian manifolds \cite{BERESE07,DULIU04,DUJU09,WilDra92,DziEll13,Nitschke,Novak,Roy,Zhang2017}.
Generally speaking, numerical methods developed for solving such equations
\textcolor{blue}{can be classified into three  categories including embedded narrow-band methods, mesh-free particle method and intrinsic methods \cite{}.  The  narrow-band methods use level set method \cite{Adalsteinsson2003,Bergdorf2010,Bertalmio2001,Greer2006,MacDonald08,MacDonald09,Piret,Sbalzarini,Schwartz}, kernel method \cite{Fuselier,Schwartz}   or diffusive method \cite{Li2009,Ratz} to track the interface and the surface PDE is locally  expanded. The  mesh-free method employs Lagrangian framework on the surface \cite{LeuLow11,Suchdea}.
	The intrinsic methods make use of an intrinsic mesh to discretize the surface and then solve the surface PDE with various numerical methods. Finite volume schemes for solving conservation laws and parabolic equations on surfaces were introduced in
	\cite{Gie09,LefOku09,GieMul14} and \cite{LenNem11}, respectively.
	Authors of papers \cite{DUJU05,DUJU05A,DUJU09} presented mixed finite volume methods for solving surface convection-diffusion equations and higher-order PDE. In \cite{DUJU11}, the same authors described a finite element method for solving Cahn-Hilliard equations modeling a phase separation phenomenon on surfaces.  Finite element methods for solving various classes of surface PDEs were developed in
	\cite{Dzi88,DziEll07,Dzi08,DziEll13}.}

%

In this paper, we present local discontinuous Galerkin (LDG) methods \textcolor{blue}{based on the intrinsic method,} for solving convection-diffusion equation and Cahn-Hilliard equation on static surfaces, respectively, given that DG methods possess good stabilization mechanisms for
problems which can generate steep gradients or discontinuities in their solutions. The first LDG method for solving convection-diffusion equation was introduced in \cite{CocShu98}. Later, the LDG method for solving time-dependent PDEs with higher order derivatives was developed in \cite{YANSHU02}.  In \cite{XIAXU07}, A LDG method for solving Cahn-Hilliard type equations was presented.  Recent papers \cite{DedMad13,AntDed15} described several discontinuous Galerkin (DG) formulations for solving linear elliptic equations on surfaces,
and presented  a priori error estimates in the $L^2$ and energy norms for piecewise linear and higher-order
ansatz functions and surface approximations. The hybridizable DG method to solve elliptic problems on surfaces was analyzed in \cite{CocDem16}.   \textcolor{blue}{Xu et al. \cite{Guo2016,Xu2009} introduced LDG methods to solve the surface diffusion and Willmore flow on graphs (also see \cite{Aizinger,Boscarino}). These methods are mainly based on level set  representation,  and  the surface PDE is actually solved in 3D.}

\textcolor{blue}{The purpose of this work is to extend the LDG method to directly  solve surface PDEs posed on 2D manifolds \cite{Zhang2015}.}
The continuous surface $\G$ is approximated by a piecewise polygonal surface $\G_h$.   The essential ingredients of our LDG schemes consist of design of numerical fluxes motivated by \cite{CocShu98,AntDed15,DedMad15}, and the total variation diminishing (TVD) Runge-Kutta (RK) time stepping method \cite{GotShu98,ShuOsh88}. To this end, we show that these schemes are energy stable. Obviously, approximating $\G$ by $\G_h$ introduces a geometric
error.
We do not analyze how the geometric error affects the accuracy of the solution. Instead, we presented various numerical examples.  We demonstrate convergence of the fully discrete  schemes for solving convection-diffusion equations and results of using the scheme for solving the Cahn-Hilliard equations defined on general surfaces. We refer readers to \cite{DziEll13} and \cite{GieMul14} for an analysis of geometric errors, and \cite{DedMad13} for analyzing interior penalty (IP) DG method for elliptic
problems on surfaces. We also would like to point out that authors of \cite{DedMad15} derived a DG method for solving steady state solution of advection-dominated problems in which they used IP DG approach treating the diffusion term.

The rest of the paper is organized as follows:  Section~\ref{sec:Model}
describes the model equations considered in the paper.
Section~\ref{sec:ldg} is devoted to presenting implementation of the LDG schemes for solving the surface convection-diffusion  and Cahn-Hilliard equations.
Numerical tests are presented in Section~\ref{sec:NR}. Finally, conclusions are drawn in Section~\ref{sec:conclusion}.

\section{ Model Problems and Preliminaries}
\label{sec:Model}

We first set basic notations.  In this paper, we consider to solve initial value problems defined by two classes of equations posed on  a closed and oriented two-dimensional parameterizable $C^k$-surface
$\G \subset\mathbb{R}^3$.

For later use, we assume that there exists a sufficiently thin open subset $\sN \subset \mathbb{R}^3$ around $\G$ in a way that for every $\bx  \in \sN$ there is a unique point $\boldsymbol{\zeta}(\bx) \in \G $ with \cite{DziEll07,DziEll13,GieMul14,DedMad13}
\be
\bx = \boldsymbol{\zeta}(\bx) + d(\bx) \boldsymbol{\nu}_{\G}(\boldsymbol{\zeta}(\bx))~.
\label{eq:biject}
\ee
Here $d$ denotes the signed distance function to $\G$ which is assumed to be well defined in $\sN$, and $\boldsymbol{\nu}_{\G}(\boldsymbol{\zeta}(\bx))$ is the unit
normal vector to $\G$ pointing towards the non-compact component of $\mathbb{R}^3 \setminus \G $ \cite{DziEll07,GieMul14,DedMad13}.
We further assume that $\G$ is given locally by some diffeomorphism $X : \Omega \rightarrow \G  $ and some parameter domain
$\Omega \subset \mathbb{R}^2$ \cite{Dzi08}.

We use the notation $X = X(\theta)$ with $\theta \in \Omega$, and introduce standard notations \cite{DziEll13} for the first fundamental form
$G(\theta) = \left(g_{ij}(\theta) \right)_{i,j=1,2}$ by
\be
g_{ij}(\theta) = \frac{\partial X}{\partial \theta_i}(\theta) \cdot \frac{\partial X}{\partial \theta_j}(\theta)~.
\ee
We denote the determinant of $G$ by $g = \det (G)$, and use superscript indices to denote the inversion of the matrix $G$ so that
\be
\left(g^{ij} \right)_{i,j,=1,2} = G^{-1}~.
\ee

\begin{defn}
For a sufficiently smooth and differentiable function $f: \G \rightarrow R$, the tangential gradient in the parametric coordinates is given by
\be
 \nabla_\Gamma f (X(\theta)) = \sum_{i,j=1}^{2} g^{ij}(\theta) \frac{\partial f(X(\theta))}{\partial \theta_j}  \frac{\partial X}{\partial \theta_i}(\theta)~,
\ee
and the Laplace-Beltrami operator on $\G$ is defined by
\be
\Delta_{\Gamma} f= \frac{1}{\sqrt{g(\theta)}} \sum_{i,j=1}^{2} \frac{ \partial   }{\partial \theta_j} \left( g^{ij}(\theta) \sqrt{g(\theta)} \frac{\partial f(X(\theta)) }{\partial \theta_i} \right) ~.
\ee
\label{def:tang_lap}
\end{defn}

We refer readers to \cite{DziEll13} for definitions of these operators with respect to implicitly defined surfaces and related discussion.

In this paper, \textcolor{blue}{the following two types of equations on surfaces will be considered:
	\begin{itemize}
		\item  The time-dependent convection-diffusion equation, which is a second-order surface PDE:
		\be
		\begin{array}{ll}
			u_t +  \nabla_{\Gamma}\cdot (\bsw u)  = \nabla_{\Gamma} \cdot (a(u) \nabla_{\G} u)~,& ~~~~~~{\rm in} ~(0, T)\times \G~,\\
			u(t = 0 ) = u_0 ~, & ~~~~~~{\rm on} ~\G ~.
		\end{array}
		\label{eq:adv_diff}
		\ee
		where    $a(\cdot) > 0$ and the  velocity field $\bsw$ is  tangent to $\G$.
		\item The time-dependent Cahn-Hilliard equation, which is a fourth-order surface PDE:
		\be
		\begin{array}{ll}
			u_t   = \nabla_{\G} \cdot \left( b(u) \nabla_{\G} ( - \gamma \Delta_{\G} u + \Psi'(u) ) \right)~, & ~~~~~~{\rm in} ~(0, T)\times \G~,\\
			u(t = 0 ) = u_0 ~, & ~~~~~~{\rm on} ~\G ~.
		\end{array}
		\label{eq:bi_harmo}
		\ee
		where $\Psi(u) $ is the bulk homogeneous free  energy density. $b(u)$ is the diffusion mobility, and $\gamma$ is a positive constant.
	\end{itemize}
}

\section{Numerical Schemes}
\label{sec:ldg}

In the present work we use a triangulated   surface $\Gamma_h \subset \mathcal{N}$ composed of planar triangles $\bK_h$ whose vertices stand on $\Gamma$ to approximate $\Gamma$. Therefore,
$$\G_h = \bigcup_{\bK_h \in \mathcal{T}_h} \bK_h~,
$$
where $\mathcal{T}_h$ denotes  the set of the planar triangles which form an admissible triangulation.  Notice that there exists a one-to-one
relation between points $\bx \in \G_h$ and $ \boldsymbol{\zeta} \in \G$ satisfying Eq.~(\ref{eq:biject}).

Denote by $\mathcal{E}$ the set of edges (facets) of $\mathcal{T}_h$. For each $e \in \mathcal{E}$, denote by $h_e$ the length of the edge $e$. 
Let $N_{{\bK}_h}$ be an integer index of element ${\bK}_h$, and ${\bK}^{e,-}_{h}$ and ${\bK}^{e,+}_{h}$ be the two elements sharing the common edge $e$. 
Denote by $\bn^-_h$ and $\bn^+_h$  the unit outward conormal vectors defined on the edge $e$ for ${\bK}^{e,-}_{h}$ and ${\bK}^{e,+}_{h}$, respectively.  The conormal $\bn^-_h$ to a point $\bx \in e$ is defined as follows \cite{AntDed15} \textcolor{blue}{
	\begin{itemize}
		\item the unique unit vector that  lies in the plane containing  ${\bK}^{e,-}_{h}$;
		\item $
		\bn^-_h(\bx)\cdot (\bx - \by) \geq 0~,~ \forall \by \in {\bK}^{e,-}_{h} \cap B_\epsilon(\bx)~,
		$ where $B_\epsilon(\bx)$ is a ball centered in $\bx$.  The radius $\epsilon (>0)$ of $B_\epsilon(\bx)$ is sufficiently small.
	\end{itemize}
}
\noindent The conormal $\bn^+_h$ is defined similarly. With this definition, \textcolor{blue}{we have }
$$
\bn^+_h \neq -\bn^-_h~
$$
in general ( See Fig.~\ref{fig:surf_tri} for example).

\begin{figure}[!ht]
	\begin{center}
		\includegraphics[scale=0.6]{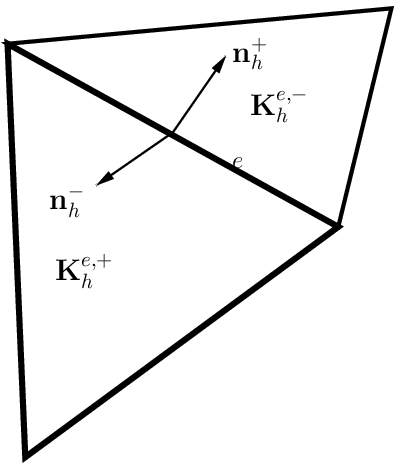}
		\caption{Two elements ${\bK}^{e,+}_{h}$ and ${\bK}^{e,-}_{h}$ and their respective conormals $\bn^+_h$ and $\bn^-_h$ on the common edge $e$. }
		\label{fig:surf_tri}
	\end{center}
\end{figure}

Let $\mathbb{P}_k(D)$ denote  the space of polynomials of degree not greater than $k$ on any planar domain $D$.
The discrete DG space $\mathbb{S}_{h,k}$ of scalar function associated with $\G_h$ is
$$
\mathbb{S}_{h,k} = \left\{ \chi \in L^2(\G_h) : \chi|_{\bK_h} \in \mathbb{P}_k(\bK_h), ~ \forall~ \bK_h \in \textcolor{blue}{\mathcal{T}_h} \right\}~,
$$
i.e. the space of piecewise polynomials which are globally in $L^2(\G_h)$.

The vector-valued DG space  $ \boldsymbol{\Sigma}_{h,k}$  associated with $\G_h$ is chosen to be
$$
\boldsymbol{\Sigma}_{h,k} = \left\{  \boldsymbol{\varphi} \in \left[L^2(\G_h)\right]^3 : \boldsymbol{\varphi} |_{\bK_h} \in  \left[\mathbb{P}_k(\bK_h) \right]^3 , ~ \forall~ \bK_h \in \textcolor{blue}{\mathcal{T}_h}\right\}~.
$$

For $ \upsilon_h \in \mathbb{S}_{h,k}$ and $\br_h \in \boldsymbol{\Sigma}_{h,k} $, we use $\upsilon_h^{\pm}$ and $\br_h^{\pm}$ to denote the trace of $ \upsilon_h $ and  $\br_h$ on $e = {\bK}^{e,+}_{h} \bigcap {\bK}^{e,-}_{h}$
taken within the interior of ${\bK}^{e,+}_{h} $ and ${\bK}^{e,-}_{h}$, respectively.

For a given function $f_h\in\mathbb{S}_{h,k}$, the surface gradient operator on $\G_h$ denoted by $\nabla_{\G_h}$ and the laplace-Beltrami operator
on $\G_h$ denoted by $\Delta_{\Gamma_h}$ are defined similarly as in Def.~\ref{def:tang_lap}.

For the purpose of analysis, we further introduce a lift onto $\G$ for any function $v_h$ defined
on the discrete surface $\G_h$ as follows.

\begin{defn} For any function $v_h \in \mathbb{S}_{h,k}$ defined on $\G_h$, the lift or extension \cite{DziEll13} onto $\G$ is given by:
	$$
	v^l_h(\boldsymbol{\zeta}) := v_h(\bx(\boldsymbol{\zeta}))~,~~~~~ \boldsymbol{\zeta} \in \G,
	$$
	where $\bx(\boldsymbol{\zeta})$ is the unique solution of Eq.~(\ref{eq:biject}).
	\label{def:lift}
\end{defn}

Also by  Def.~\ref{def:lift}, for every $\bK_h \subset\G_h$, there is a unique curved triangle $\bK^l_h =  \boldsymbol{\zeta}(\bK_h) \subset \G $.
When transforming from $\G_h$ to $\G$,  denote by $\delta_h$  the quotient between the smooth and discrete surface measures $dA$ on $\G$ and $dA_h$ on $\G_h$ so that $\delta_h dA_h = dA$, and $\delta_e$ the quotient between measures $ds_h$ on  $e$ and $ds$ on $e^l$ so that $\delta_e ds_h = ds$.

\textcolor{blue}{$\mathbb{S}^{l}_{h,k}$}, the lifted finite element space is defined as
\be
\textcolor{blue}{\mathbb{S}^l_{h,k} = \left\{ \phi_h = \varphi^l_h | \varphi_h \in  \mathbb{S}_{h,k}  \right\}}~.
\ee
The lifted space $\boldsymbol{\Sigma}^l_{h,k} $ is defined similarly.

\subsection{LDG Method for Surface Convection-Diffusion Equation}
\label{sec:ldg_conv_diff}
By introducing the
auxiliary variable $\mathbf{q} = \sqrt{a(u)} \nabla_{\Gamma} u$, the model problem (\ref{eq:adv_diff}) can be rewritten as a first order system of equations:
\be
\left\{
\begin{array}{l}
	u_t + \nabla_{\Gamma}\cdot (u\bsw - \sqrt{a(u)}  \mathbf{q}) = 0 ~, \\[3mm]
	\mathbf{q} - \nabla_{\Gamma} g(u) = 0 ~,
\end{array}
\right .
\label{eq:model_1st_order}
\ee
where $g(u) = \int^u \sqrt{a(s)}ds$.

The LDG method for solving Eqs.~(\ref{eq:model_1st_order}) is defined by: Find $u_h \in \mathbb{S}_{h,k}$ and
$\bq_h \in \boldsymbol{\Sigma}_{h,k}$, such that for all test functions $v_h \in \mathbb{S}_{h,k}$ and $\br_h \in \boldsymbol{\Sigma}_{h,k} $,
\be
\left\{
\begin{array}{l}
	\displaystyle{\int_{ \bK_h }} (u_h)_t v_h d\bx -
	\displaystyle{\int_{ \bK_h }} \left( u_h\bsw_h - \sqrt{a(u)}\bq_h  \right) \cdot \nabla_{\Gamma_h} v_h d\bx +
	\displaystyle{\int_{ \bK_h }} \left(  \beta_h u_h v_h \right) d\bx \\
	~~~~~~~~~~~ +\displaystyle{\int_{\partial \bK_h}} \reallywidehat{u_h\bsw_h} \cdot \bn_h v_h ds
	- \displaystyle{\int_{\partial \bK_h}} \reallywidehat{\sqrt{a} \bq} \cdot \bn_h v_h ds = 0 ~, \\[3mm]
	\displaystyle{\int_{\bK_h}} \bq_h \cdot \br_h d\bx = -\displaystyle{\int_{\bK_h}} g(u_h) \nabla_{\G_h} \cdot \br_h d\bx
	+ \displaystyle{\int_{\partial \bK_h}}  \widehat{g} \bn_h \cdot \br_h ds ~.
\end{array}
\right .
\label{eq:ldg}
\ee
Here $\widehat{g}$, $\reallywidehat{\sqrt{a} \bq}$ and $\reallywidehat{u_h\bsw_h}$ are numerical fluxes which \textcolor{blue}{will be } prescribed later on.
$\bsw_h$, \textcolor{blue}{defined  on $\G_h$, is a surface Raviart-Thomas interpolant of $\bsw$ such that normal jumps of the velocity across edges will vanish, namely, $\bsw_h^-\cdot\bn_h^-+\bsw_h^+\cdot\bn_h^+=0$.
The method to compute $\bsw_h$ is described in \cite{DedMad15}. The term containing $\beta_h$ is for stability, and $\beta_h$ is defined to be:
\be
\beta_h =  \left\{
\begin{array}{l}
	0~,~~~~~~~~~~~~~~~~~~{\rm if}~~  \nabla_{\G_h}\cdot \bsw_h \geq 0~;\\
	-\frac{1}{2}\nabla_{\G_h}\cdot \bsw_h~,~~~~{\rm if}~~  \nabla_{\G_h}\cdot \bsw_h < 0~.
\end{array}
\right .
\ee}

To facilitate  definitions of numerical fluxes, following trace operators $\{\cdot\}$ and $[\![\cdot ]\!]$ are introduced by following ideas in \cite{AntDed15}.
\begin{defn} Denote by ${\bK}^{e,+}_{h}$ and ${\bK}^{e,-}_{h}$  the two elements sharing the common edge $e$.
	\textcolor{blue}{
		\begin{itemize}
			\item For $\upsilon \in  L^2(\G_h)$,
			$\{\upsilon\}$ and $[\![ \upsilon ]\!] $ are defined as
			\be
			\{ \upsilon \} = \frac{1}{2}(\upsilon^- + \upsilon^+ )~,
			~~[\![ \upsilon ]\!] = \upsilon^+- \upsilon^-~ { on }~e.
			\ee
			\item For $\boldsymbol{\varphi} \in [ L^2(\G_h)]^3 $, $\{ \boldsymbol{\varphi},\bn_h \}$ and
			$[\![ \boldsymbol{\varphi} ,\bn_h ]\!]$ are defined as
			\be
			\{ \boldsymbol{\varphi},\bn_h \}  = \frac{1}{2} ( \boldsymbol{\varphi}^+\cdot \bn^+_h - \boldsymbol{\varphi}^-\cdot \bn^-_h )~,
			~~[\![ \boldsymbol{\varphi} ,\bn_h ]\!] = \boldsymbol{\varphi}^- \cdot \bn^-_h + \boldsymbol{\varphi}^+ \cdot \bn^+_h ~{ on }~e.
			\ee
		\end{itemize}
	}
\end{defn}

Denote by $S^{{\bK}^-_h}_{{\bK}^+_h} \in \{0, 1 \}$ a switch function \cite{PERPER08}.  $S^{{\bK}^-_h}_{{\bK}^+_h}$ is associated with $\bK^+_h$ on the edge that $\bK^+_h$ and $\bK^-_h$ share, and is defined by:
\be
S^{{\bK}^-_h}_{{\bK}^+_h} =
\left\{
\begin{array}{l}
	1~,~~~~ {\rm if}~ N_{{\bK}_h^+} > N_{{\bK}_h^-}~, \\
	0~,~~~~ {\rm otherwise}~.
\end{array}
\right .
\ee


Motivated by \cite{CocShu98,AntDed15,DedMad15}, the numerical fluxes \textcolor{blue}{on $e$ for ${\bK}^{e,+}_{h} $ and ${\bK}^{e,-}_{h}$
are defined} respectively,  as follows.
\textcolor{blue}{
	\begin{itemize}
		\item The convective fluxes   $\reallywidehat{u_h\bsw_h}^+$ and $\reallywidehat{u_h\bsw_h}^-$ :
		\be
		\begin{array}{lll}
			\reallywidehat{u_h\bsw_h}^+ & = & ~~ \left(\{u_h\bsw_h, \bn_h \} + \frac{\alpha}{2} [\![ u_h ]\!]\right)\bn^+_h~,\\
			\reallywidehat{u_h\bsw_h}^- & = & -\left(\{u_h\bsw_h, \bn_h \} + \frac{\alpha}{2} [\![ u_h ]\!] \right)\bn^-_h~,
		\end{array}
		\label{eq:adv_flux}
		\ee
		where $\alpha = \max |\bn_h \cdot \bsw_h|$.
		\item  The diffusive fluxes $\left(\reallywidehat{\sqrt{a}\bq}^+ ,  \widehat{ g }^+ \right)^T$ and
		$\left(\reallywidehat{\sqrt{a}\bq}^- ,  \widehat{ g}^- \right)^T$  :
		\be
		\begin{array}{lll}
			\reallywidehat{\sqrt{a}\bq}^+ &=& ~~\left( \frac{[\![g(u_h)]\!] }{[\![u_h]\!]} \{  \bq_h, \bn_h \} - C_{11}  [\![ u_h ]\!] + \boldsymbol{C}_{12}\cdot \bn_h^+[\![ \bq_h,\bn_h ]\!]\right)\bn^+_h~\\
			\reallywidehat{\sqrt{a}\bq}^- &=& -\left(\frac{[\![gg(u_h)]\!] }{[\![u_h]\!]} \{ \bq_h, \bn_h \} - C_{11} [\![ u_h ]\!] + \boldsymbol{C}_{12}\cdot \bn_h^+[\![ \bq_h,\bn_h ]\!]\right)\bn^-_h~.
		\end{array}
		\label{eq:diff_flux_q}
		\ee
		\be
		\begin{array}{lll}
			\widehat{ g }^+  &=& \{  g(u_h) \} - \boldsymbol{C}_{12} \cdot \bn^+_h [\![ u_h ]\!]~,\\
			\widehat{ g }^- &=& \{  g(u_h) \} - \boldsymbol{C}_{12} \cdot \bn^+_h [\![ u_h ]\!]~.
		\end{array}
		\label{eq:diff_flux_u}
		\ee
	Here the penalization coefficients $C_{11}$ and $\boldsymbol{C}_{12}$ are chosen to be
		$$
		C_{11} = \frac{1}{h_e}~,~~~~~~~ \boldsymbol{C}_{12} = \frac{1}{2}\left( S^{{\bK}_h^{e,-}}_{{\bK}_h^{e,+}} \bn^+_h + S^{{\bK}_h^{e,+}}_{{\bK}_h^{e,-}} \bn^-_h  \right)~.
		$$
	\end{itemize}
}



It is easy to see that with the above choices of the numerical fluxes, $[\![ \reallywidehat{u_h\bsw_h} , \bn  ]\!] = 0 $,  $[\![ \widehat{g} ]\!] = 0$ and $[\![ \reallywidehat{\sqrt{a}\bq}, \bn ]\!] = 0$. Thus these numerical fluxes  are consistent and  conservative. Moreover, they allow for a local resolution of $\bq_h$ in terms of $u_h$.

\textcolor{blue}{We introduce the surface $H^1$ space   $H^1(\G_h) = \left\{ v|_{\bK_h} \in H^1(\bK_h), \forall ~ \bK_h \in \G_h \right\}$. The following Lemma is needed for analyzing the $L^2$-stability of the scheme (\ref{eq:ldg}).  }
\begin{lma} \textcolor{blue}{(\cite{AntDed15})}
	Let $v\in H^1(\G_h)$ and $\br \in [H^1(\G_h)]^3$. Then
	\be
	\begin{array}{lll}
		\displaystyle{\sum_{\bK_h \in \G_h}} \displaystyle{\int_{ \partial \bK_h }} v\br \cdot \bn_h ds & = &  \displaystyle{\sum_{e \in \mathcal{E}} } \displaystyle{\int_{ e }} [\![ \br, \bn_h ]\!] \{ v \} + \{\br, \bn_h \} [\![ v ]\!] ds~.
	\end{array}
	\ee
	\label{lma:identity}
\end{lma}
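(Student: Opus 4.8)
The plan is to collapse the sum of element boundary integrals into a sum over edges and then verify a pointwise algebraic identity on each edge. First I would use that $\G$ is closed, so $\G_h$ has empty boundary and every edge $e \in \mathcal{E}$ is shared by exactly two triangles; label them ${\bK}^{e,-}_{h}$, ${\bK}^{e,+}_{h}$ with outward conormals $\bn_h^-$, $\bn_h^+$. Since $v \in H^1(\G_h)$ and $\br \in [H^1(\G_h)]^3$ are piecewise $H^1$, the trace theorem applied on each planar triangle gives well-defined one-sided traces $v^\pm$, $\br^\pm$ on $e$, and regrouping the three edge contributions of each triangle by edge yields
\[
\sum_{\bK_h \in \G_h} \int_{\partial \bK_h} v\, \br \cdot \bn_h \, ds \;=\; \sum_{e \in \mathcal{E}} \int_e \left( v^- \br^- \cdot \bn_h^- + v^+ \br^+ \cdot \bn_h^+ \right) ds .
\]

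Next, on a fixed edge $e$ set $a^\pm := \br^\pm \cdot \bn_h^\pm$. By the definitions of the trace operators, $[\![ \br, \bn_h ]\!] = a^- + a^+$, $\{ \br, \bn_h \} = \tfrac{1}{2}(a^+ - a^-)$, $\{ v \} = \tfrac{1}{2}(v^+ + v^-)$ and $[\![ v ]\!] = v^+ - v^-$. A direct expansion gives
\[
[\![ \br, \bn_h ]\!]\, \{ v \} + \{ \br, \bn_h \}\, [\![ v ]\!] = \tfrac{1}{2}(a^- + a^+)(v^+ + v^-) + \tfrac{1}{2}(a^+ - a^-)(v^+ - v^-) = a^- v^- + a^+ v^+ ,
\]
so the integrand on the right-hand side of the claimed identity coincides pointwise on $e$ with $v^- \br^- \cdot \bn_h^- + v^+ \br^+ \cdot \bn_h^+$. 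Substituting this into the previous display proves the lemma.

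The step that needs care, rather than a genuine obstacle, is bookkeeping the orientation conventions: because the triangles are planar facets of a curved surface one has $\bn_h^+ \neq -\bn_h^-$ in general, which is precisely why $\{ \boldsymbol{\varphi}, \bn_h \}$ is defined with a minus sign, $\tfrac{1}{2}(\boldsymbol{\varphi}^+\cdot\bn_h^+ - \boldsymbol{\varphi}^-\cdot\bn_h^-)$, instead of the familiar planar average. With that convention the two-line algebra above closes exactly, with no residual geometric term, and no boundary integrals appear because $\partial\G_h=\emptyset$; on a surface with boundary one would simply carry the one-sided boundary-edge terms separately.
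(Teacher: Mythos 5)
Your proof is correct: the regrouping of element boundary integrals over interior edges (valid since $\G_h$ is closed, so every edge is shared by exactly two planar triangles with a common length measure on $e$), together with the pointwise algebra $[\![ \br, \bn_h ]\!]\{v\} + \{\br,\bn_h\}[\![ v ]\!] = v^-\br^-\cdot\bn_h^- + v^+\br^+\cdot\bn_h^+$ under the paper's sign conventions, is exactly the standard argument. The paper itself offers no proof, quoting the identity from \cite{AntDed15}, and your derivation reproduces that reference's reasoning, including the key observation that the modified average $\{\br,\bn_h\}$ absorbs the fact that $\bn_h^+\neq-\bn_h^-$ on a faceted surface.
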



The following estimate holds for the LDG scheme (\ref{eq:ldg}) to solve the surface convection-diffusion equation (\ref{eq:adv_diff}).
\begin{proposition}[$L^2$-stability of LDG scheme (\ref{eq:ldg})]
	Let $(u_h, \bq_h)^T$ be a solution of the scheme (\ref{eq:ldg}) with  initial value $u_h(\bx,t=0)$ given by $L^2$ projection of $u_0$,
	and $(u^l_t, \bq^l_h)^T$ its lift according to Definition~\ref{def:lift}. Then the following stability estimate holds:
	\begin{eqnarray}
	&&\frac{1}{2}\int_{\G} \frac{1}{\delta_h} (u_h^l)^2(\bx,T)d\bx +\int^T_0\int_{\G} \frac{1}{\delta_h} |\bq^l_h(\bx,t)|^2d\bx dt + \Theta_{T,C}([\sW^l_h])\nonumber\\ &\leq& \frac{1}{2}\int_{\G} \frac{1}{\delta_h} (u_0)^2(\bx)d\bx~,
	\end{eqnarray}
	where $\sW^l_h = (u^l_h, \bq^l_h)^T$ and  $\Theta_{T,C}([\sW^l_h]) = \displaystyle{\int_{0}^{T}} \displaystyle{\sum_{e^l \in \mathcal{E}^l} } \displaystyle{\int_{ e^l }} \frac{1}{\delta_e} \left(C_{11}+\frac{\alpha}{2}\right)[\![ u^l_h ]\!]^2 ds dt$.
	\label{prop:ldg_diff_stab}
\end{proposition}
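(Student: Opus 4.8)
The plan is the standard LDG energy argument transported to the surface setting: test the scheme against its own solution, integrate by parts element-wise, re-sum the facet terms via Lemma~\ref{lma:identity}, and change variables from $\G_h$ to $\G$. Concretely, I would first take $v_h = u_h$ in the first equation of (\ref{eq:ldg}) and $\br_h = \bq_h$ in the second. Setting $\br_h=\bq_h$ in the second equation and integrating by parts — using $\nabla_{\G_h} g(u_h) = \sqrt{a(u_h)}\,\nabla_{\G_h} u_h$ on each $\bK_h$ — one obtains $\int_{\bK_h}|\bq_h|^2\,d\bx = \int_{\bK_h}\sqrt{a(u_h)}\,\bq_h\cdot\nabla_{\G_h}u_h\,d\bx + \int_{\partial\bK_h}(\widehat g - g(u_h))\,\bn_h\cdot\bq_h\,ds$. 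Substituting this into the first equation cancels the $\sqrt{a}\,\bq_h\cdot\nabla_{\G_h}u_h$ volume contributions and yields $+\int_{\bK_h}|\bq_h|^2\,d\bx$; for the convective volume term I would use $u_h\nabla_{\G_h}u_h = \tfrac12\nabla_{\G_h}(u_h^2)$ and integrate by parts once more, so that, together with the stabilization integral, the element interior contributes $\int_{\bK_h}\bigl(\tfrac12\nabla_{\G_h}\cdot\bsw_h + \beta_h\bigr)u_h^2\,d\bx$, whose integrand is pointwise nonnegative by the definition of $\beta_h$. Summing over $\bK_h\in\sT_h$ gives
\[
\tfrac12\tfrac{d}{dt}\!\int_{\G_h}\! u_h^2\,d\bx + \int_{\G_h}\!|\bq_h|^2\,d\bx + \sum_{\bK_h}\int_{\bK_h}\!\bigl(\tfrac12\nabla_{\G_h}\cdot\bsw_h + \beta_h\bigr)u_h^2\,d\bx + \mathrm{BT} = 0,
\]
where $\mathrm{BT}$ collects the $\partial\bK_h$-integrals of $-\tfrac12(\bsw_h\cdot\bn_h)u_h^2$, $\reallywidehat{u_h\bsw_h}\cdot\bn_h u_h$, $(g(u_h)-\widehat g)\,\bn_h\cdot\bq_h$, and $-\reallywidehat{\sqrt a\bq}\cdot\bn_h u_h$.

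The crux, and the step I expect to be the main obstacle, is showing $\mathrm{BT} = \sum_{e\in\mathcal{E}}\int_e\bigl(C_{11}+\tfrac\alpha2\bigr)[\![u_h]\!]^2\,ds$. For this I would rewrite each element-boundary sum as an edge sum via Lemma~\ref{lma:identity} and insert the flux definitions (\ref{eq:adv_flux})--(\ref{eq:diff_flux_u}). On the convective side $\reallywidehat{u_h\bsw_h}^{\pm}\cdot\bn_h^{\pm} = \pm\bigl(\{u_h\bsw_h,\bn_h\} + \tfrac\alpha2[\![u_h]\!]\bigr)$, so the edge contribution is $\bigl(\{u_h\bsw_h,\bn_h\} + \tfrac\alpha2[\![u_h]\!]\bigr)[\![u_h]\!]$; the Raviart--Thomas property $\bsw_h^+\cdot\bn_h^+ + \bsw_h^-\cdot\bn_h^- = 0$ gives both $\{u_h\bsw_h,\bn_h\} = (\bsw_h^+\cdot\bn_h^+)\{u_h\}$ and $-\tfrac12\bigl((\bsw_h^+\cdot\bn_h^+)(u_h^+)^2 + (\bsw_h^-\cdot\bn_h^-)(u_h^-)^2\bigr) = -(\bsw_h^+\cdot\bn_h^+)\{u_h\}[\![u_h]\!]$, so these combine to $\tfrac\alpha2[\![u_h]\!]^2$. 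On the diffusive side $\{g(u_h)\} - \widehat g = \boldsymbol{C}_{12}\cdot\bn_h^+[\![u_h]\!]$, while $\reallywidehat{\sqrt a\bq}^{\pm}\cdot\bn_h^{\pm} = \pm\bigl(\tfrac{[\![g(u_h)]\!]}{[\![u_h]\!]}\{\bq_h,\bn_h\} - C_{11}[\![u_h]\!] + \boldsymbol{C}_{12}\cdot\bn_h^+[\![\bq_h,\bn_h]\!]\bigr)$; using $\tfrac{[\![g(u_h)]\!]}{[\![u_h]\!]}[\![u_h]\!] = [\![g(u_h)]\!]$, the $\{\bq_h,\bn_h\}[\![g(u_h)]\!]$ terms cancel between the $(g(u_h)-\widehat g)$-contribution and the $\reallywidehat{\sqrt a\bq}$-contribution, the two $\boldsymbol{C}_{12}$-terms cancel against each other, and only $C_{11}[\![u_h]\!]^2$ survives. (The evident typo $[\![gg(u_h)]\!]$ in (\ref{eq:diff_flux_q}) is read as $[\![g(u_h)]\!]$.) This is precisely where the particular design of the numerical fluxes and of the difference quotient $\tfrac{[\![g(u_h)]\!]}{[\![u_h]\!]}$ is used.

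Combining the two yields $\tfrac12\tfrac{d}{dt}\int_{\G_h}u_h^2\,d\bx + \int_{\G_h}|\bq_h|^2\,d\bx + (\text{nonneg.}) + \sum_e\int_e(C_{11}+\tfrac\alpha2)[\![u_h]\!]^2\,ds = 0$. Integrating on $(0,T)$ and discarding the nonnegative volume term,
\[
\tfrac12\!\int_{\G_h}\!u_h^2(\cdot,T)\,d\bx + \int_0^T\!\!\int_{\G_h}\!|\bq_h|^2\,d\bx\,dt + \int_0^T\!\sum_e\int_e(C_{11}+\tfrac\alpha2)[\![u_h]\!]^2\,ds\,dt \;\le\; \tfrac12\!\int_{\G_h}\!u_h^2(\cdot,0)\,d\bx ,
\]
and since $u_h(\cdot,0)$ is the $L^2(\G_h)$-projection of $u_0$ (pulled back to $\G_h$), the right-hand side is $\le \tfrac12\int_\G\tfrac{1}{\delta_h}u_0^2\,d\bx$ because the projection is $L^2(\G_h)$-norm non-increasing and $\delta_h\,dA_h = dA$. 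Finally I would change variables from $\G_h$ to $\G$ and from $e$ to $e^l$ using $\delta_h\,dA_h = dA$ and $\delta_e\,ds_h = ds$, i.e. $\int_{\G_h}\phi\,dA_h = \int_\G\tfrac{1}{\delta_h}\phi^l\,dA$ (with $\phi = u_h^2,\,|\bq_h|^2$) and $\int_e\phi\,ds_h = \int_{e^l}\tfrac{1}{\delta_e}\phi^l\,ds$ (with $\phi = [\![u_h]\!]^2$); this rewrites the previous display as precisely the asserted estimate with $\Theta_{T,C}([\sW^l_h])$. Apart from the facet bookkeeping of the middle step, all the calculations are routine.
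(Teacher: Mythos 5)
Your proposal is correct and follows essentially the same route as the paper's proof: testing the scheme with $(u_h,\bq_h)$, using Lemma~\ref{lma:identity} together with the flux definitions (\ref{eq:adv_flux})--(\ref{eq:diff_flux_u}) and the Raviart--Thomas property $[\![\bsw_h,\bn_h]\!]=0$ to reduce the facet terms to $(C_{11}+\tfrac{\alpha}{2})[\![u_h]\!]^2$, absorbing $\tfrac12\nabla_{\G_h}\cdot\bsw_h$ with $\beta_h$, invoking the $L^2$-projection bound for the initial data, and lifting to $\G$ via $\delta_h$ and $\delta_e$. The only differences are organizational (you integrate the auxiliary equation by parts before substituting, and you spell out the diffusive-flux cancellation and the sign of the $\beta_h$ term that the paper only asserts), which if anything makes the argument more explicit.
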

Proof of Proposition \ref{prop:ldg_diff_stab} follows directly from \cite{CocShu98,DziEll07,DedMad15}, and is briefly described below.

\begin{proof}
	Denote by $\sW_h = (u_h, \bq_h)^T$ and $\sV_h = (v_h, \br_h)^T$.
	Let's first define a functional  by  adding two component equations of  Eq.~(\ref{eq:ldg}) up, and  summing over all elements:

	\begin{eqnarray} 	\label{eq:ldg_Ah_diff}
		0&=&\sA_h(\sW_h, \sV_h)\nonumber \\
		&=& \displaystyle{\sum_{\bK_h \in \G_h}} \displaystyle{\int_{ \bK_h} (u_h)_t v_h d\bx}
		-\displaystyle{\sum_{\bK_h \in \G_h}} \displaystyle{\int_{\bK_h}} u_h\bsw_h\cdot\nabla_{\Gamma_h} v_h d\bx
		+ \displaystyle{\sum_{\bK_h \in \G_h}} \displaystyle{\int_{ \bK_h }} \bq_h \cdot \br_h d\bx\nonumber  \\
		&& +  \displaystyle{\sum_{\bK_h \in \G_h}} \displaystyle{\int_{ \bK_h }}  \sqrt{a}\bq_h  \cdot \nabla_{\Gamma_h} v_h  d\bx + \displaystyle{\sum_{\bK_h \in \G_h}} \displaystyle{\int_{ \bK_h }} g(u_h) \nabla_{\G_h} \cdot \br_h d\bx \nonumber \\
		&& +  \sumK\intK \beta_h u_h v_h d\bx  +  \displaystyle{\sum_{\bK_h \in \G_h}} \displaystyle{\int_{\partial \bK_h }} v_h \reallywidehat{u_h \bsw_h}\cdot \bn_h ds\nonumber  \\
		& &+  \displaystyle{\sum_{\bK_h \in \G_h}} \displaystyle{\int_{\partial \bK_h }} (-\reallywidehat{\sqrt{a}\bq})\cdot \bn_h v_h ds -
		\displaystyle{\sum_{\bK_h \in \G_h}} \displaystyle{\int_{\partial \bK_h }}\hat{g}\bn_h\cdot\br_h ds~.
	\end{eqnarray}

	
	By replacing $v_h(\bx)$ and $\br_h(\bx)$ by $u_h(\bx, t)$ and $\bq_h(\bx,t)$ in Eq.~(\ref{eq:ldg_Ah_diff}), integrating from 0 to $T$,
	and using Lemma~\ref{lma:identity} and Eqs.~(\ref{eq:adv_flux}) - (\ref{eq:diff_flux_u}), it can be shown that
	\be
	\begin{array}{lll}
		0&= &\intT \sA_h(\bw_h, \bw_h)dt \\
		&=&
		\intT \sumK \displaystyle{\int_{ \bK_h } (u_h)_t u_h d\bx}dt +
		\intT \sumK \displaystyle{\int_{ \bK_h }} |\bq_h|^2 d\bx dt \\
		& - & \intT \displaystyle{\sum_{\bK_h \in \G_h}} \displaystyle{\int_{\bK_h}} u_h\bsw_h\cdot\nabla_{\Gamma_h} u_h d\bx dt + \intT \sumK \displaystyle{\int_{\partial \bK_h }} u_h \reallywidehat{u_h \bsw_h}\cdot \bn_h dsdt  ~\\
		&+&  \intT \sume \displaystyle{\int_{e}} C_{11}[\![ u_h ]\!]^2 ds dt +\intT\sumK\intK \beta_h u^2_h d\bx dt  ~ \\
		&=& \frac{1}{2}\displaystyle{\int_{\G_h} (u_h)^2(\bx, T) d\bx} dt - \frac{1}{2}\displaystyle{\int_{ \G_h } (u_h)^2(\bx, 0) d\bx} dt  \\
		&& + \intT\displaystyle{\int_{\G_h}} |\bq_h|^2 d\bx dt +
		\intT\sume \displaystyle{\int_{e}} C_{11}[\![ u_h ]\!]^2 ds dt \\
		&+&\intT\sumK\intK \beta_h u^2_h d\bx dt-\intT\sumK \intK u_h\bsw_h\cdot\nabla_{\Gamma_h} u_h d\bx dt~ \\
		&+&\intT\sumK \intpK u_h \reallywidehat{u_h \bsw_h}\cdot \bn_h dsdt
	\end{array}
	\ee
	
	Define by
	\begin{eqnarray}
	&&\sB_h(\sW_h, \sW_h)\nonumber\\
	&= &-\intT \sumK \intK u_h\bsw_h\cdot\nabla_{\Gamma_h} u_h d\bx dt
	+ \intT \sumK \intpK u_h \reallywidehat{u_h \bsw_h}\cdot \bn_h ds dt.\nonumber
	\end{eqnarray}
	
	By integrating by parts on each $\bK_h$ for the first term of $\sB_h(\sW_h, \sW_h)$ and using $[\![ \bsw_h, \bn_h ]\!] = 0$ and Eq.~(\ref{eq:adv_flux}), we obtain

	\begin{eqnarray}
		&&\sB_h(\sW_h, \sW_h)\nonumber\\
		 &=& -\frac{1}{2}\intT \sumK \intpK (u_h)^2 \bsw_h\cdot \bn_h dsdt
		+\frac{1}{2}\intT \sumK\intK u_h^2 \nabla_{\Gamma_h}\cdot\bsw_h d\bx dt \nonumber \\
		&&+ \intT\sumK\intpK u_h \reallywidehat{u_h \bsw_h}\cdot \bn_h ds dt\nonumber  \\
		&=& -\frac{1}{2}\intT \sume \displaystyle{\int_e}( [\![\bsw_h,\bn_h]\!] \{u_h^2 \} + \{\bsw_h, \bn_h \}[\![u_h^2]\!])dsdt\nonumber  \\
		&&+\intT\sume \displaystyle{\int_e}( \{u_h\bsw_h, \bn_h\} + \frac{\alpha}{2}[\![u_h]\!])[\![u_h]\!] dsdt
		+\frac{1}{2}\intT \sumK\intK u_h^2 \nabla_{\Gamma_h}\cdot\bsw_h d\bx dt~\nonumber \\
		&=& -\frac{1}{2}\intT \sume \displaystyle{\int_e}(\{\bsw_h, \bn_h \}[\![u_h^2]\!])dsdt   \\
		&&+ \intT \sume \displaystyle{\int_e}(\frac{1}{2}\{\bsw_h, \bn_h \}[\![u_h^2]\!] + \frac{\alpha}{2}[\![u_h]\!]^2)dsdt
		+\frac{1}{2}\intT \sumK\intK u_h^2 \nabla_{\Gamma_h}\cdot\bsw_h d\bx dt~.\nonumber
	\end{eqnarray}

	This leads to
	\be
	\begin{array}{lll}
		\intT \sA_h(\bw_h, \bw_h)dt &=& \frac{1}{2}\displaystyle{\int_{\G_h} (u_h)^2(\bx, T) d\bx} dt -
		\frac{1}{2}\displaystyle{\int_{ \G_h } (u_h)^2(\bx, 0) d\bx} dt \\
		& +& \intT\displaystyle{\int_{\G_h}} |\bq_h|^2 d\bx dt +
		\intT\sume \displaystyle{\int_{e}} (\frac{\alpha}{2} + C_{11})[\![ u_h ]\!]^2 ds dt~.
	\end{array}
	\ee

	Since $\displaystyle{\int_{ \G_h } (u_h)^2(\bx, 0) d\bx} \leq  \displaystyle{\int_{ \G_h } (u_0)^2 d\bx} $ (by Lemma 4.2 in \cite{DziEll13}), it gives rise to:
	\begin{eqnarray}
	&&\frac{1}{2} \displaystyle{\int_{ \G_h } (u_h)^2(\bx, T) d\bx} +
	\intT\displaystyle{\int_{\G_h}} |\bq_h|^2 d\bx dt +
	\intT\sume \displaystyle{\int_{e}} (\frac{\alpha}{2} + C_{11})[\![ u_h ]\!]^2 ds dt\nonumber\\
	&\leq& \frac{1}{2} \displaystyle{\int_{ \G_h } (u_h)^2(\bx, 0) d\bx} ~.
	\end{eqnarray}
	
	By lifting $u_h$ and $\bq_h$ onto surface $\G$, it is shown that
	\begin{eqnarray}
	&&\frac{1}{2} \displaystyle{\int_{ \G } \frac{1}{\delta_h} (u^l_h)^2(\bx, T) d\bx} +
	\intT\displaystyle{\int_{\G}} \frac{1}{\delta_h} |\bq^l_h|^2 d\bx dt +
	\intT\sume \displaystyle{\int_{e}} \frac{1}{\delta_e} (\frac{\alpha}{2} + C_{11})[\![ u_h ]\!]^2 ds dt\nonumber\\
	&\leq& \frac{1}{2} \displaystyle{\int_{ \G } \frac{1}{\delta_h} (u_0)^2(\bx, 0) d\bx} ~.
	\end{eqnarray}

\end{proof}

\subsection{LDG Method for Cahn-Hilliard Equation}
\label{sec:ldg_bi_harmo}

In order to solve the model problem (\ref{eq:bi_harmo}) by the LDG method, the equation is rewritten as a first order system of \textcolor{blue}{equations}:
\be
\left\{
\begin{array}{l}
	u_t   =  \nabla_{\G} \cdot \mathbf{S} ~, \\
	\bS = b(u)\bP ~,\\
	\bP = \nabla_{\G} (-q + r )~, \\
	q = \gamma \nabla_{\G} \cdot \bW~, \\
	\bW = \nabla_{\G} u~, \\
	r =  \Psi'(u) ~.
\end{array}
\right .
\label{eq:bi_harmo_1st_order}
\ee

The LDG scheme for solving Eqs.~(\ref{eq:bi_harmo_1st_order}) is as follows: Find $u_h, q_h, r_h \in \mathbb{S}_{h,k}$ and
$\bS_h, \bP_h, \bW_h \in \boldsymbol{\Sigma}_{h,k}$, such that for all test functions $v_h, \varphi_h, \xi_h \in \mathbb{S}_{h,k}$ and
$\mathbf{\Theta}_h, \mathbf{\Phi}_h, \mathbf{\Upsilon}_h \in \boldsymbol{\Sigma}_{h,k}$,
\be
\left\{
\begin{array}{l}
	\displaystyle{\int_{{\bK}_h}} (u_h)_t v_h d\bx +
	\displaystyle{\int_{{\bK}_h}}  \bS_h \cdot \nabla_{\G_h} v_h d\bx -
	\displaystyle{\int_{\partial {\bK}_h}}    \widehat{ \bS} \cdot \bn_h v_h ds  = 0 ~, \\
	
	\displaystyle{\int_{{\bK}_h}} \bS_h \cdot \mathbf{\Theta}_h d\bx = \displaystyle{\int_{{\bK}_h}} b(u_h)\bP_h  \cdot \mathbf{\Theta}_h d\bx ~, \\
	
	\displaystyle{\int_{{\bK}_h}} \bP_h \cdot \mathbf{\Phi}_h d\bx = -\displaystyle{\int_{{\bK}_h}} (-q_h+r_h)
	\nabla_{\G_h} \cdot \mathbf{\Phi}_h d\bx +
	\displaystyle{\int_{\partial {\bK}_h}} (\hat{r}-\hat{q})\bn_h \cdot \mathbf{\Phi}_h ds~, \\
	
	\displaystyle{\int_{{\bK}_h}} q_h \varphi_h d\bx = -\gamma \displaystyle{\int_{{\bK}_h}} \bW_h \cdot
	\nabla_{\G_h} \varphi_h d\bx +
	\gamma\displaystyle{\int_{\partial {\bK}_h}}  \widehat{\bW} \cdot \bn_h \varphi_h ds~, \\
	
	\displaystyle{\int_{{\bK}_h}} \bW_h \cdot \mathbf{\Upsilon}_h d\bx = -\displaystyle{\int_{{\bK}_h}} u_h \nabla_{\G_h} \cdot \mathbf{\Upsilon}_h d\bx
	+ \displaystyle{\int_{\partial {\bK}_h}} \widehat{u} \bn_h \cdot \mathbf{\Upsilon}_h ds ~\\
	
	\displaystyle{\int_{{\bK}_h}} r_h  \xi_h d\bx = \displaystyle{\int_{{\bK}_h}} \Psi'(u_h) \xi_h d\bx~.
\end{array}
\right .
\label{eq:ldg_CH}
\ee

Following ideas in \cite{CocShu98,AntDed15,XIAXU07} the numerical fluxes on $e$ for $\bK_h^{e,+}$ and $\bK_h^{e,-}$ for solving the system (\ref{eq:ldg_CH}) are defined by:
\be
\begin{array}{lll}
	\widehat{ \bS}^+ & = & ~~\left(\{  \bS_h, \bn \} - C_{11} [\![ u_h ]\!] + \boldsymbol{C}_{12}\cdot \bn_h^+[\![ \bS_h,\bn ]\!]\right)\bn^+_h~,\\
	\widehat{ \bS}^- & = & -\left(\{  \bS_h, \bn \} - C_{11} [\![ u_h ]\!] + \boldsymbol{C}_{12}\cdot \bn_h^+[\![ \bS_h,\bn ]\!]\right)\bn^-_h~.
\end{array}
\ee

\be
\begin{array}{lll}
	\widehat{ u }^+ & = & \{ u_h \} - \boldsymbol{C}_{12} \cdot \bn^+_h [\![ u_h ]\!]~\\
	\widehat{ u }^- & = & \{ u_h \} - \boldsymbol{C}_{12} \cdot \bn^+_h [\![ u_h ]\!]~.
\end{array}
\ee

\be
\begin{array}{lll}
	\widehat{ \bW}^+ &=& ~~\left(\{  \bW_h, \bn \} - C_{11} [\![ u_h ]\!] + \boldsymbol{C}_{12}\cdot \bn_h^+[\![ \bW_h,\bn ]\!]\right)\bn^+_h~,\\
	\widehat{ \bW}^- &=& -\left(\{  \bW_h, \bn \} - C_{11} [\![ u_h ]\!] + \boldsymbol{C}_{12}\cdot \bn_h^+[\![ \bW_h,\bn ]\!]\right)\bn^-_h~.
\end{array}
\ee

\be
\begin{array}{lll}
	\widehat{ r }^+ &=& \{  r_h \} - \boldsymbol{C}_{12} \cdot \bn^+_h [\![ r_h ]\!]~,\\
	\widehat{ r }^- &=& \{  r_h \} - \boldsymbol{C}_{12} \cdot \bn^+_h [\![ r_h ]\!]~.
\end{array}
\ee

\be
\begin{array}{lll}
	\widehat{ q }^+ = \{  q_h \} - \boldsymbol{C}_{12} \cdot \bn^+_h [\![ q_h ]\!]~,\\
	\widehat{ q }^- = \{  q_h \} - \boldsymbol{C}_{12} \cdot \bn^+_h [\![ q_h ]\!]~.
\end{array}
\ee

\begin{proposition}[energy stability of LDG scheme (\ref{eq:ldg_CH})]
	The solution to the scheme (\ref{eq:ldg_CH}) satisfies the energy stability
	$$
	\frac{d}{dt}\displaystyle{\int_{\G}}\frac{1}{\delta_h} \left(\frac{\gamma}{2} \bW^l_h\cdot\bW^l_h + \Psi(u^l_h)\right)d\bx \leq 0~.
	$$
	\label{prop:LDG_CH_engy_stab}
\end{proposition}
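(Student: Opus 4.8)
My plan is to run the standard LDG energy argument (in the spirit of \cite{XIAXU07,CocShu98}) on the polyhedral surface $\G_h$ and then lift. Since the lift of Definition~\ref{def:lift} satisfies $\delta_h\,dA_h=dA$ with $\delta_h$ independent of $t$, one has $\int_{\G}\frac1{\delta_h}\bigl(\frac{\gamma}{2}\bW^l_h\cdot\bW^l_h+\Psi(u^l_h)\bigr)d\bx=\int_{\G_h}\bigl(\frac{\gamma}{2}|\bW_h|^2+\Psi(u_h)\bigr)d\bx=:E_h(t)$, so it suffices to show $\frac{d}{dt}E_h\le0$. Differentiating in $t$ (the triangulation is static) gives $\frac{d}{dt}E_h=\sumK\intK\bigl(\gamma\,\bW_h\cdot\partial_t\bW_h+\Psi'(u_h)\,\partial_t u_h\bigr)d\bx$, and the last equation of (\ref{eq:ldg_CH}) with $\xi_h=\partial_t u_h$ lets me replace $\Psi'(u_h)$ by $r_h$ in the second term.

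Next I would treat the second-order block $u_t=\nabla_\G\cdot\bS$. Put $\mu_h:=r_h-q_h$ (the discrete chemical potential; note that $\widehat r-\widehat q$ is exactly the single-valued flux $\widehat{\mu_h}=\{\mu_h\}-\boldsymbol{C}_{12}\!\cdot\bn_h^+[\![\mu_h]\!]$). Choosing $v_h=\mu_h$ in the first equation of (\ref{eq:ldg_CH}), $\mathbf{\Theta}_h=\bP_h$ in the second and $\mathbf{\Phi}_h=\bS_h$ in the third, integrating by parts on each $\bK_h$ and adding, the volume term $\intK\bS_h\cdot\nabla_{\G_h}\mu_h$ cancels against $-\intK\mu_h\,\nabla_{\G_h}\!\cdot\bS_h$ and the second equation turns $\intK\bS_h\cdot\bP_h$ into $\intK b(u_h)|\bP_h|^2$; what survives is $\sumK\intK\mu_h\,\partial_t u_h\,d\bx=-\sumK\intK b(u_h)|\bP_h|^2\,d\bx+\Xi_1$, where $\Xi_1:=\sumK\intpK\bigl(\widehat\bS\cdot\bn_h\,\mu_h+\bS_h\cdot\bn_h(\widehat{\mu_h}-\mu_h)\bigr)ds$ collects the interface terms.

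Then I would treat the biharmonic block. Because $\G_h$ is fixed and $\widehat u$ is linear in $u_h$ with $t$-independent coefficient $\boldsymbol{C}_{12}$, differentiating the fifth equation of (\ref{eq:ldg_CH}) in $t$ gives the same identity with $(u_h,\bW_h,\widehat u)$ replaced by $(\partial_t u_h,\partial_t\bW_h,\widehat{\partial_t u_h})$; this differentiation is legitimate precisely because all spatial operators and $\G_h$ are time-independent. Taking $\mathbf{\Upsilon}_h=\bW_h$ there and $\varphi_h=\partial_t u_h$ in the fourth equation of (\ref{eq:ldg_CH}), integrating by parts and adding, the volume terms cancel and one gets $\gamma\sumK\intK\bW_h\cdot\partial_t\bW_h\,d\bx+\sumK\intK q_h\,\partial_t u_h\,d\bx=\Xi_2$, an interface sum in $\widehat\bW,\bW_h,\widehat u$. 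Assembling the three steps (using $r_h=q_h+\mu_h$) yields $\frac{d}{dt}E_h=-\sumK\intK b(u_h)|\bP_h|^2\,d\bx+\Xi_1+\Xi_2$.

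The crux, and the step I expect to be the main obstacle, is to show $\Xi_1+\Xi_2=0$. I would rewrite both as sums over edges with Lemma~\ref{lma:identity}, using that the vector fluxes are conservative, $[\![\widehat\bS,\bn_h]\!]=[\![\widehat\bW,\bn_h]\!]=0$, that $\widehat u,\widehat q,\widehat r$ (hence $\widehat{\mu_h}$) are single-valued, and that $\{\widehat\bS,\bn_h\}=\{\bS_h,\bn_h\}+\boldsymbol{C}_{12}\!\cdot\bn_h^+[\![\bS_h,\bn_h]\!]-C_{11}[\![u_h]\!]$ (and similarly for $\widehat\bW$); the $\{\bS_h,\bn_h\}[\![\mu_h]\!]$, $[\![\bS_h,\bn_h]\!]\{\mu_h\}$ and $\boldsymbol{C}_{12}$ contributions then cancel in pairs, and likewise for $\Xi_2$. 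This is a delicate bookkeeping that closes exactly because of the alternating ($\boldsymbol{C}_{12}$-switch) design of the fluxes and their consistency; in particular one must use the penalty-free choice $C_{11}=0$ here, as in \cite{XIAXU07}, since a genuine $C_{11}\neq0$ in $\widehat\bS$ and $\widehat\bW$ leaves the uncontrolled residue $-\sume\int_e C_{11}[\![u_h]\!]\bigl([\![\mu_h]\!]+\gamma[\![\partial_t u_h]\!]\bigr)$. With $\Xi_1+\Xi_2=0$ and the mobility $b(u_h)>0$, we conclude $\frac{d}{dt}E_h=-\sumK\intK b(u_h)|\bP_h|^2\,d\bx\le0$, which is the claimed estimate.
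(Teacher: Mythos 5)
Your overall route coincides with the paper's: the paper's proof of Proposition~\ref{prop:LDG_CH_engy_stab} consists precisely of running the energy argument of \cite{XIAXU07} on $\G_h$ and then lifting with the time-independent quotient $\delta_h$, which is exactly your plan, and your element-level identities are sound (the choices $v_h=r_h-q_h$, $\mathbf{\Theta}_h=\bP_h$, $\mathbf{\Phi}_h=\bS_h$, $\varphi_h=\partial_t u_h$, $\mathbf{\Upsilon}_h=\bW_h$, $\xi_h=\partial_t u_h$, the time-differentiation of the fifth equation, and the edge rewriting via Lemma~\ref{lma:identity} are all legitimate and reproduce the correct volume term $-\sumK\intK b(u_h)|\bP_h|^2\,d\bx$).

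The gap is in your last step: the proposition is about the scheme (\ref{eq:ldg_CH}) with its fluxes as written, where the penalization coefficient is the one prescribed in Section~\ref{sec:ldg_conv_diff}, namely $C_{11}=1/h_e\neq 0$; you are not free to ``choose'' $C_{11}=0$, so what you have actually proved is energy stability of a modified, penalty-free (alternating-flux) scheme rather than the stated one. Your bookkeeping does correctly show that with the paper's fluxes the interface terms reduce to $-\sume\int_e C_{11}[\![u_h]\!]\left([\![r_h-q_h]\!]+\gamma[\![\partial_t u_h]\!]\right)ds$, but calling this wholly ``uncontrolled'' is too quick: the second piece equals $-\frac{\gamma}{2}\frac{d}{dt}\sume\int_e C_{11}[\![u_h]\!]^2\,ds$ and can be absorbed by augmenting the discrete energy with the jump penalty; only the cross term $C_{11}[\![u_h]\!][\![r_h-q_h]\!]$ is genuinely indefinite. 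So to establish the proposition for the scheme as defined you must either control (or redesign, e.g.\ let $\widehat{\bS}$ penalize $[\![r_h-q_h]\!]$) that cross term, or state explicitly that the clean decay of $\int_{\G_h}\left(\frac{\gamma}{2}|\bW_h|^2+\Psi(u_h)\right)d\bx$ is obtained only for the $C_{11}=0$ fluxes of \cite{XIAXU07}. The paper's own two-line proof (cite \cite{XIAXU07}, then lift) never addresses how the $C_{11}$ terms are handled, so your computation in effect exposes a point the paper leaves unjustified; but as written it does not prove the proposition as stated.
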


\begin{proof}
	Following the proof in \cite{XIAXU07},
	it can be shown that
	\be
	\frac{d}{dt}\displaystyle{\int_{{\G}_h}} (\frac{\gamma}{2} \bW_h\cdot\bW_h + \Psi(u_h))d\bx \leq 0~.
	\ee
	By lifting $u_h$  and $\bW_h$ onto surface $\G$, it yields:
	\be
	\frac{d}{dt}\displaystyle{\int_{\G}}\frac{1}{\delta_h} \left(\frac{\gamma}{2} \bW^l_h\cdot\bW^l_h + \Psi(u^l_h)\right)d\bx \leq 0~.
	\ee
\end{proof}


\subsection{ Time discretization}
\label{sec:time_stepping}

The method of lines spatial approximation to PDEs is used for discretization and the time $t$ is left to be continuous till now. The second-order accurate TVD RK time discretization \cite{ShuOsh88,GotShu98}
is used to solve the semi-discrete schemes (\ref{eq:ldg}) and (\ref{eq:ldg_CH}), which can be formulated as an ordinary differential equation:
\be
\Phi_t = L(\Phi, t)~.
\label{eq:ODE}
\ee
The second-order accurate TVD RK method for solving Eq.~(\ref{eq:ODE}) is given by
\be
\begin{array}{lll}
\Phi^{(1)} & = &\Phi^n + \Delta t_n L(\Phi^n, t_n)~, \\
\Phi^{n+1} &= &\frac{1}{2}\Phi^n + \frac{1}{2} \Phi^{(1)} + \frac{1}{2}\Delta t_n L(\Phi^{(1)}, t_{n+1})~.
\end{array}
\ee
Here $\Delta t_n$ is the time step size.

\begin{rmk}
	In this paper, we only considered planar triangulations of the surface $\G$, which is at most second-order accurate. Therefore, we choose
	the second-order accurate TVD RK time-stepping method. Higher-order accurate surface
	approximation is needed in order to improve the overall accuracy of the schemes.
\end{rmk}

\section{Numerical results}
\label{sec:NR}

Numerical experiments based on the schemes defined in Sec. 3 are presented in this section. All meshes used here are generated by
MATLAB program DistMesh \cite{PerStr04}. The CFL condition for LDG scheme for convection-diffusion equation is
\be
\min \left(\frac{ \max|\bsw_h| \Delta t_n}{h_e},   \frac{ \max(a'(u)) \Delta t_n}{h_e^2} \right) \leq {\rm C}_{CFL}~,
\ee
${\rm C}_{CFL}$ is the CFL number. The time step size for LDG scheme for Cahn-Hilliard equation is:
\be
\Delta t_n = \mathcal{O}(h_e^4)~.
\ee

\subsection{Linear advection on a sphere}

\textcolor{blue}{We first illustrate the convergence rate of the numerical scheme by using the}
\textbf{test problem 6} described in \cite{GieMul14}. This test problem solves
\be
\frac{\partial u}{\partial t} + \nabla_\G\cdot (u\mathbf{V}) = 0~, ~~~~~~ t\in(0, 1]~,
\label{eq:test6}
\ee
defined on a unit spherical surface given by
$$
\mathbb{S}_1 = \left\{ \mathbf{x}\in \mathbb{R}^3 ~|~ x_1^2 + x_2^2 + x_3^2 = 1 \right\}~.
$$

The divergence-free advecting velocity field $\mathbf{V} = (v_1, v_2, v_3)^T$ at position $\bx = (x_1, x_2, x_3)^T$ is given by
$\mathbf{V} = \frac{2\pi}{||\bx||}(x_2, -x_1, 0)^T$.
The  initial condition is given by
\be
u_0(\bx) =\left \{
\begin{array}{ll}
	\frac{1}{10} \exp\left( \frac{-2(1+r^2(\bx))}{(1-r^2(\bx))^2} \right)  & ~~{\rm if}~r(\bx) < 1~, \\
	0  &  ~~{\rm otherwise}
\end{array}
\right .
\ee
with $r(\bx) = \frac{|\bx_0 - \bx|}{0.74}$ and $\bx_0 = (1, 0, 0)^T$. Table \ref{tab:lin_adv_test6}
shows that the second-order accuracy of the numerical solution is achieved.

\begin{table}[h!b!p!]
	\caption{Numerical errors and convergence order for $u_h$  of the DG method for solving the problem (\ref{eq:test6}).}
	\begin{center}
		\begin{tabular}{|l|l|l|l|l|}
			\hline
			h & {$L_1$ }& order & {$L_\infty$} & order  \\
			\hline
			0.2 &  1.21E-3 & -  & 3.77E-3  & -   \\
			\hline
			0.1 &  3.04E-4 & 1.99 & 7.70E-4 & 2.29  \\
			\hline
			0.05 & 5.63E-5 & 2.43 & 1.74E-4 & 2.15 \\
			\hline
			0.025 & 1.00E-5 & 2.49 & 3.43E-5 & 2.34 \\
			\hline
		\end{tabular}
	\end{center}
	\label{tab:lin_adv_test6}
\end{table}

\subsection{Diffusion on a sphere}

We next simulate the diffusion of a material on a sphere of radius $r$ and centered at the origin of the spherical coordinate specified by
$\phi$, $\theta$ and $r = 1$ \cite{LeuLow11}.
The initial condition and the exact solution to the diffusion equation on the surface
\be
\frac{\partial f}{\partial t} = \Delta_{\G} f ~,~~~~~~~~ t\in (0, 0.02]
\label{eq:diff_test}
\ee
is given by
\be
f(\phi, \theta, t) = \left(\sin^5(\theta)\cos(5\phi) + \sin^4(\theta)\cos(\theta)\cos(4\phi)\right)\exp(-30t/r^2)~.
\ee

Table~\ref{tab:lin_diff_test} shows the numerical errors and the second-order of accuracy of the solution.

\begin{table}[h!b!p!]
	\caption{Numerical errors and convergence order of the LDG method for solving Eq.~(\ref{eq:diff_test}).}
	\begin{center}
		\begin{tabular}{|l|l|l|l|l|l|l|}
			\hline
			h & {$L_1$ }& order & {$L_\infty$} & order & {$L_2$ }& order   \\
			\hline
			0.1 & 2.93E-2 & -  & 2.02E-2  & - & 1.23E-2 & -  \\
			\hline
			0.05 & 7.13E-3 & 2.04 & 5.73E-3 & 1.82 & 3.02E-3 & 2.03 \\
			\hline
			0.025 & 1.75E-3 & 2.03 & 1.36E-3 & 2.07 & 7.43E-4 & 2.02 \\
			\hline
			0.0125 & 4.38E-4 & 2.00 & 3.74E-4 & 1.86 & 1.85E-4 & 2.01 \\
			\hline
		\end{tabular}
	\end{center}
	\label{tab:lin_diff_test}
\end{table}

\subsection{Convection-diffusion on a sphere}
Motivated by test problems in \cite{DUJU05}, we consider to solve on the unit sphere $\mathbb{S}_1$ convection-diffusion problem
described by
\be
\frac{\partial u}{\partial t} + \nabla_\G \cdot (u\mathbf{V})  = a\Delta_{\G} u + s ~.
\label{eq:adv_diff_samp}
\ee
The divergence-free advecting velocity field $\mathbf{V} = (v_1, v_2, v_3)^T$ at position $\bx = (x_1, x_2, x_3)^T$ is given by
$\mathbf{V} = \frac{2\pi}{||\bx||}(x_2, -x_1, 0)^T$. In the spherical coordinate system $(\theta, \phi)$, defined by
$$
\bx = (r \sin\theta\cos\phi, r\sin\theta\sin\phi, r\cos\theta)^T
$$
for $\theta\in[0, \pi]$ and $\phi\in [0,2\pi)$,
the exact solution to this problem is chosen to be
$$
u(\bx,t) = \exp(-t)\sin^2(\theta)\cos^2(\phi)~.
$$
Value of the diffusion coefficient $a$ is set to be 0.05. The source term function $s$ is chosen such that $u(\bx,t)$ satisfies Eq.~(\ref{eq:adv_diff_samp}) exactly. Table~\ref{tab:conv_diff_test} shows the numerical errors and the second-order accuracy of the numerical solution to problem (\ref{eq:adv_diff_samp}).

\begin{table}[!htp]
	\caption{Numerical errors and convergence order of the LDG method for solving Eq.~(\ref{eq:adv_diff_samp}).}
	\begin{center}
		\begin{tabular}{|l|l|l|l|l|l|l|}
			\hline
			h & {$L_1$ }& order & {$L_\infty$} & order    \\
			\hline
			0.1 &  5.67E-3 & -  & 1.35E-3  & -  \\
			\hline
			0.05 & 1.36E-3 & 2.06 & 3.48E-4 & 1.96  \\
			\hline
			0.025 & 3.35E-4 & 2.02 & 8.98E-5 & 1.95  \\
			\hline
			0.0125 & 8.33E-5 & 2.01 & 2.28E-5 & 1.98  \\
			\hline
		\end{tabular}
	\end{center}
	\label{tab:conv_diff_test}
\end{table}

\subsection{Cahn-Hilliard equation on surfaces}

In this set of numerical simulations, we set $ \Psi'(u) = u^3 - u$, $b(u)=1.0$, and $\gamma = 0.008$ as in \cite{DUJU11}. The tests are done on different surfaces. The initial condition for all simulations is randomly generated.
The first simulation is performed on the unit spherical surface $\mathbb{S}_1$;
the second simulation is done on an ellipsoid defined by
$$
\mathbb{S}_2 = \left\{ \mathbf{x}\in \mathbb{R}^3 ~|~ \frac{x_1^2}{4} + \frac{x_2^2}{1} + \frac{x_3^2}{1.5^2} = 1 \right\}~;
$$
and the third simulation is performed on a biconcave surface defined by
$$
\mathbb{S}_3 = \left\{ \mathbf{x} \in \mathbb{R}^3 ~|~  x_3^2 - 0.25 R_0^2 \left(1- \frac{x_1^2 + x_2^2}{R_0^2} \right)
\left(C_0 + C_1 \frac{x_1^2 + x_2^2}{R_0^2} + C_2 \frac{ x_1^2 + x_2^2}{R_0^2} \right)^2   = 0 \right \}~,
$$
where $R_0 = 1.4$, $C_0= 0.207161$, $C_1 = 2.002558$ and  $C_2 = -1.122762$.

Figures \ref{fig:sphere_CH}, \ref{fig:ellip_CH} and \ref{fig:rbc_CH} show simulation results. We can observe that initially randomly generated patterns gradually merge  into large structures as expected.

\begin{figure}[!ht]
	\centering
	\subfigure[]{\label{fig:sphere_CH_t_0}\epsfig{figure=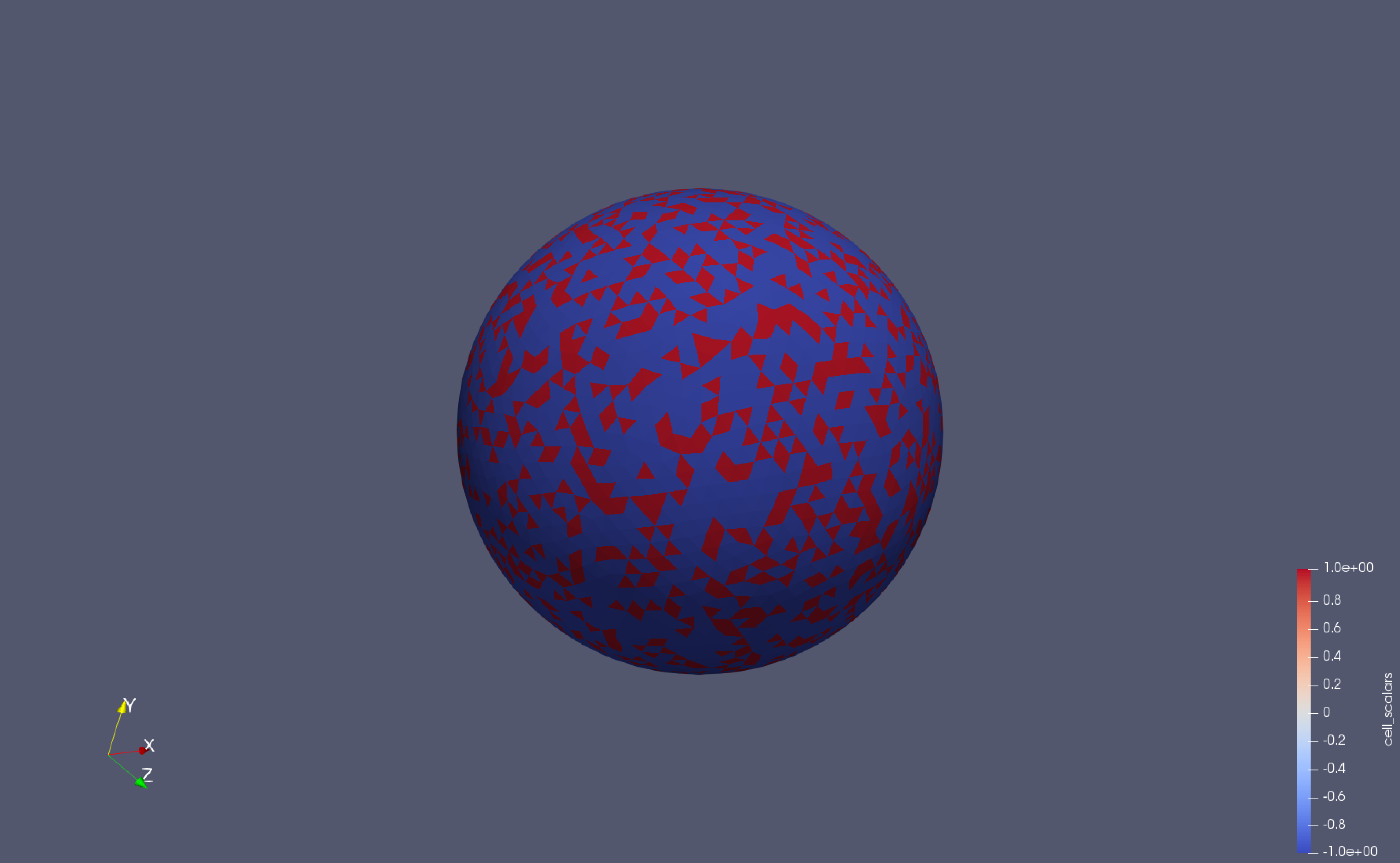,height=1.2in}}
	\subfigure[]{\label{fig:sphere_CH_t_01}\epsfig{figure=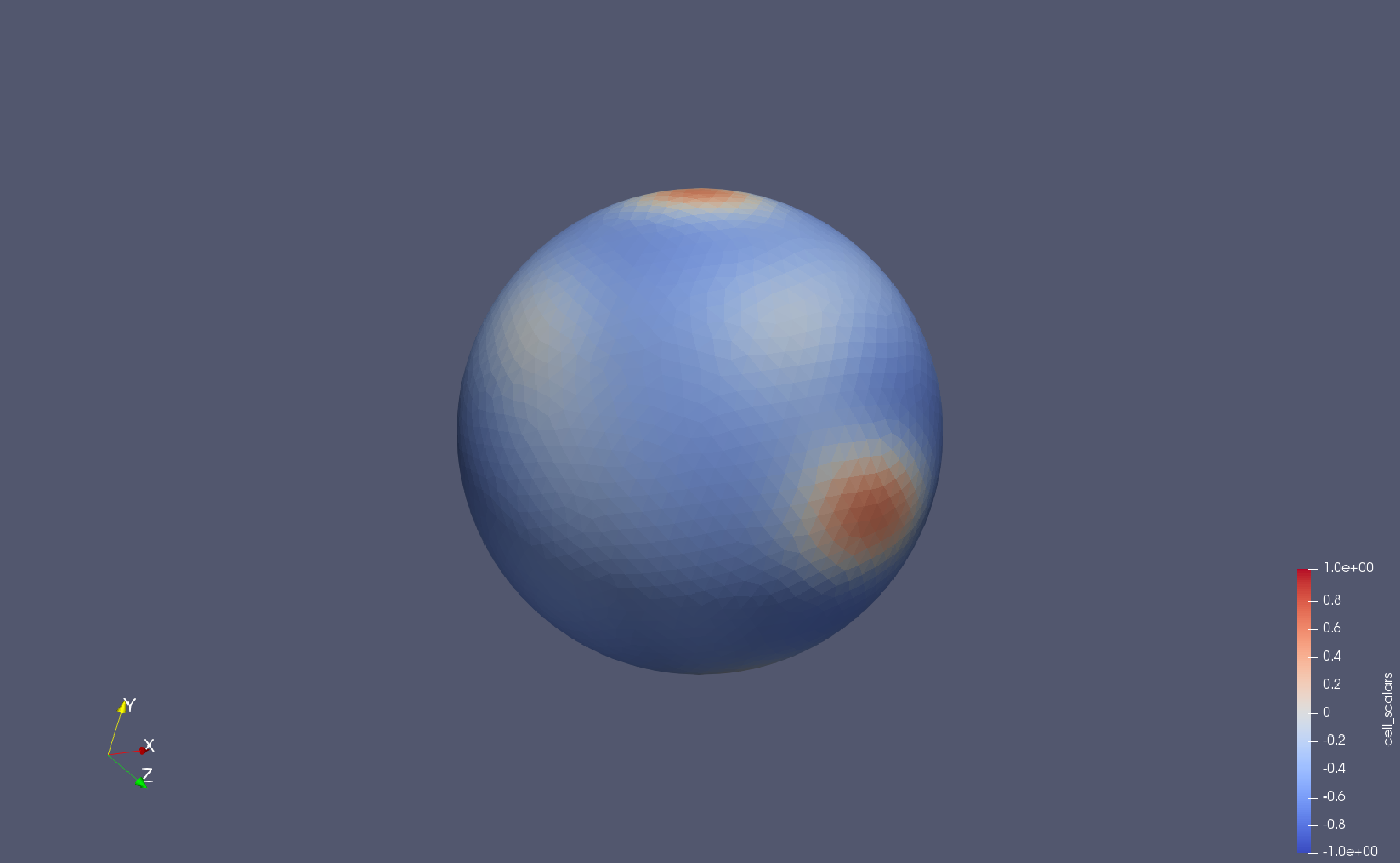,height=1.2in}}
	\subfigure[]{\label{fig:sphere_CH_t_02}\epsfig{figure=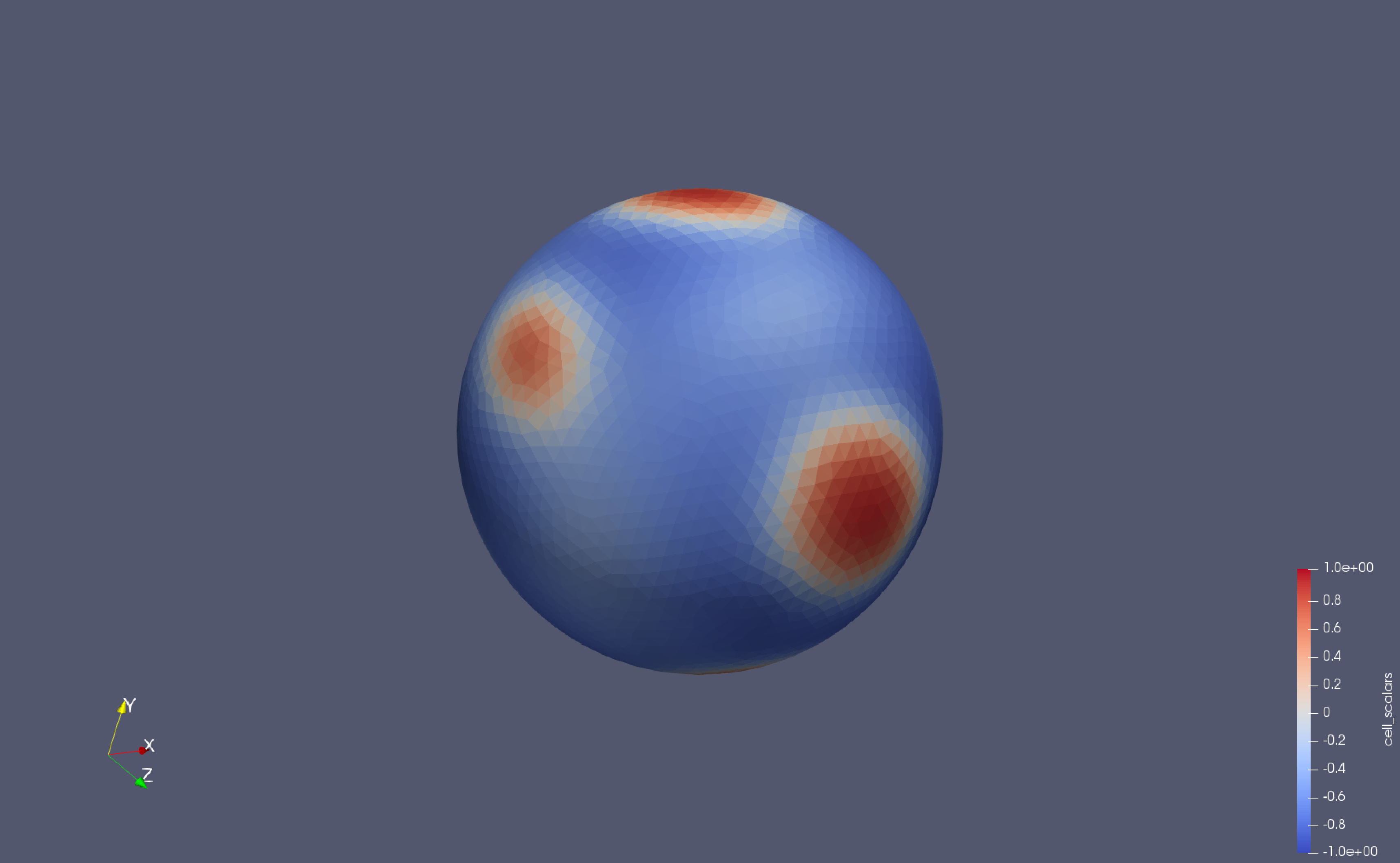,height=1.2in}}
	\subfigure[]{\label{fig:sphere_CH_t_03}\epsfig{figure=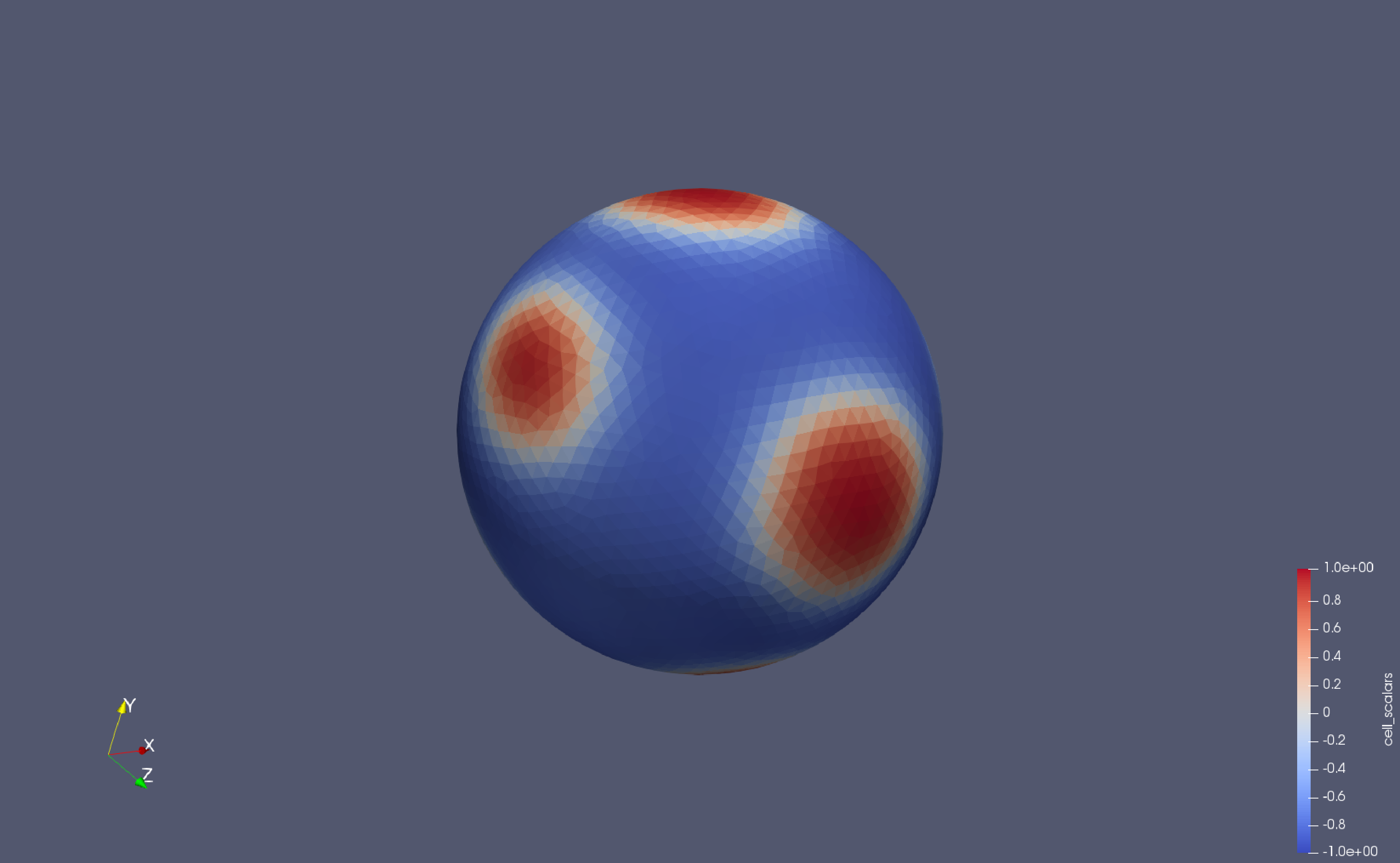,height=1.2in}}
	\caption{Numerical solutions of the concentration $u$ at time $ t = 0.0, 0.1, 0.2, 0.3$ on unit sphere $\mathbb{S}_1$ with 2964 nodes and 5924 triangles.
	}
	\label{fig:sphere_CH}
\end{figure}

\begin{figure}[!ht]
	\centering
	\subfigure[]{\label{fig:ellip_CH_t_0}\epsfig{figure=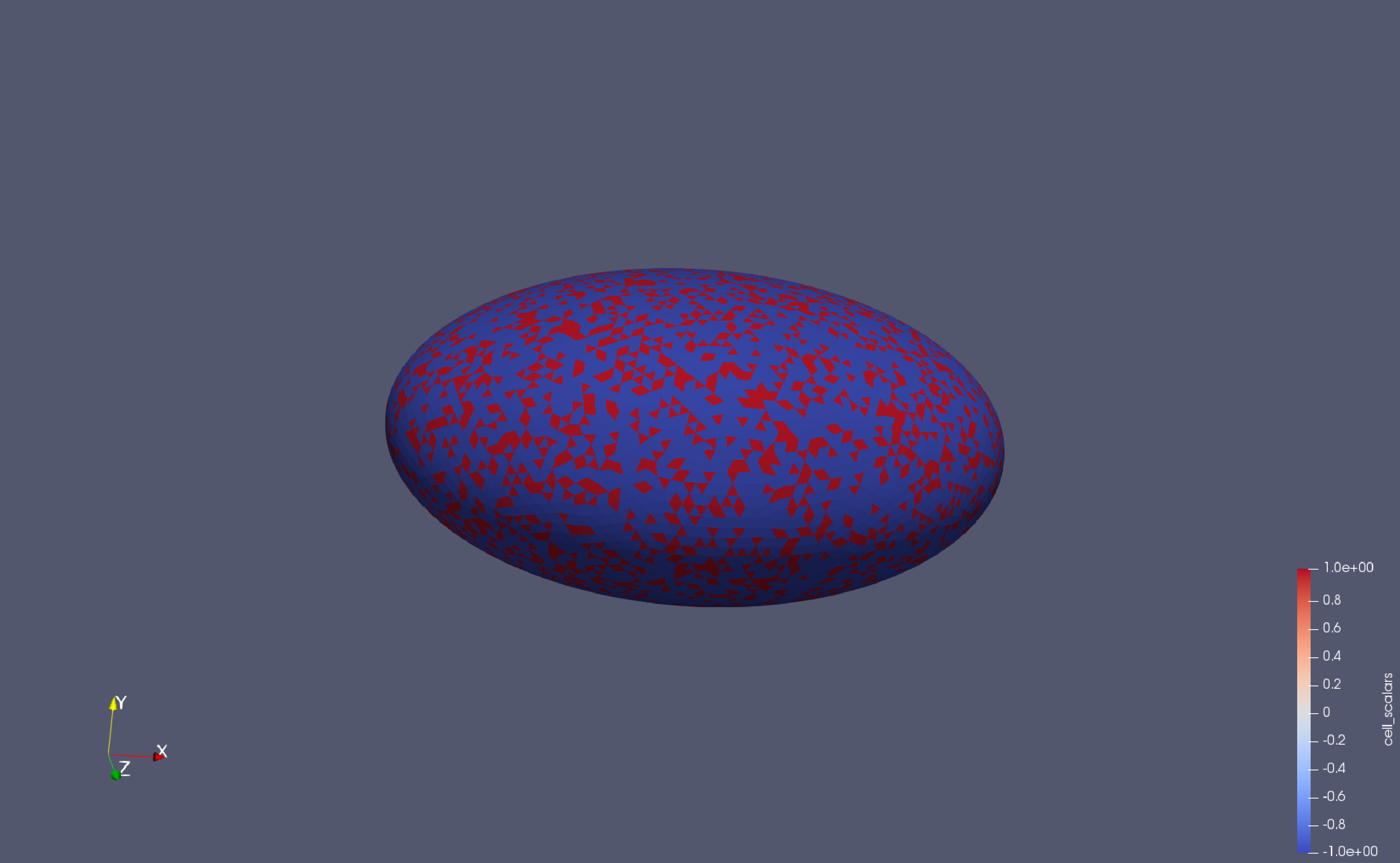,height=1.2in}}
	\subfigure[]{\label{fig:ellip_CH_t_01}\epsfig{figure=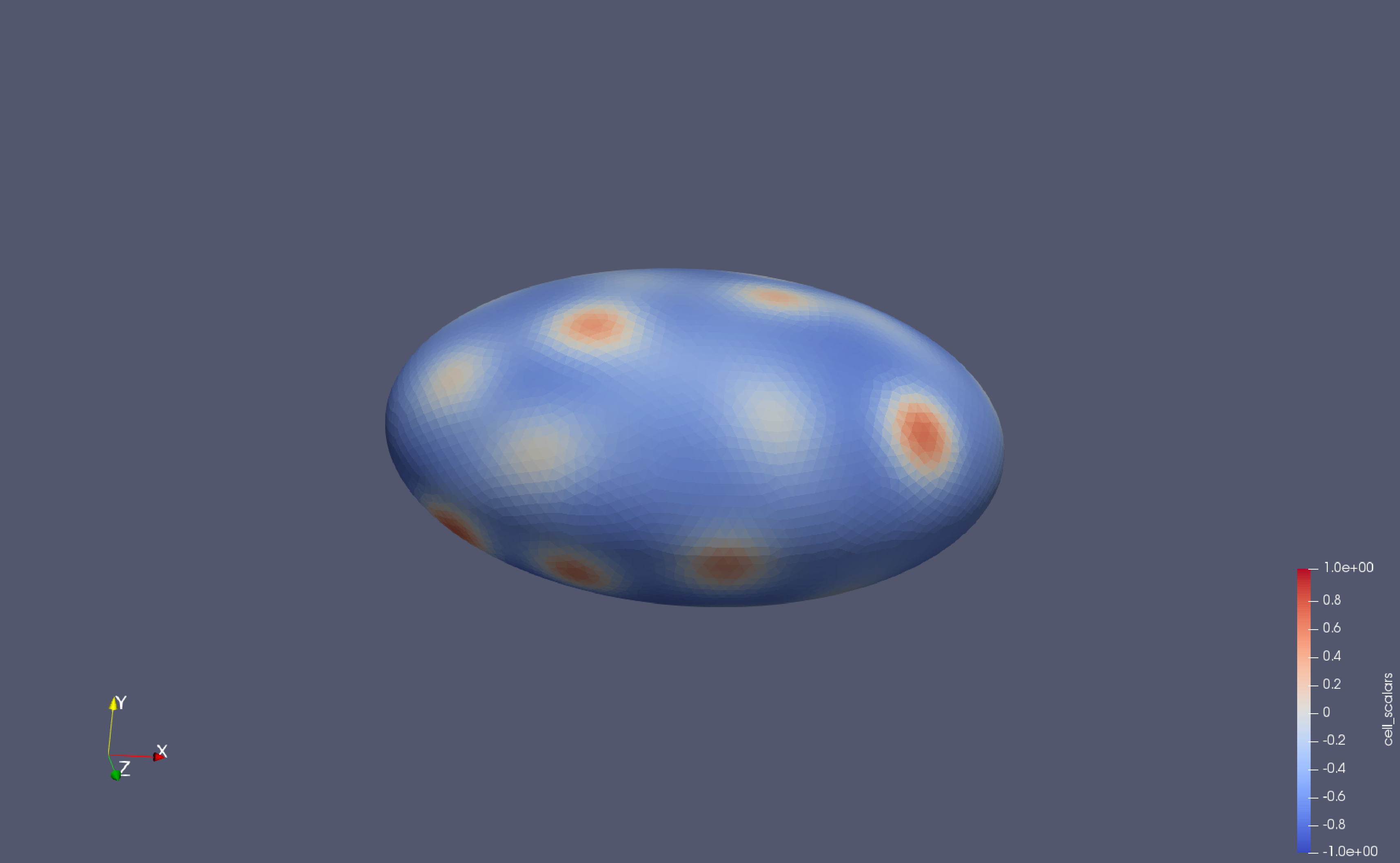,height=1.2in}}
	\subfigure[]{\label{fig:ellip_CH_t_02}\epsfig{figure=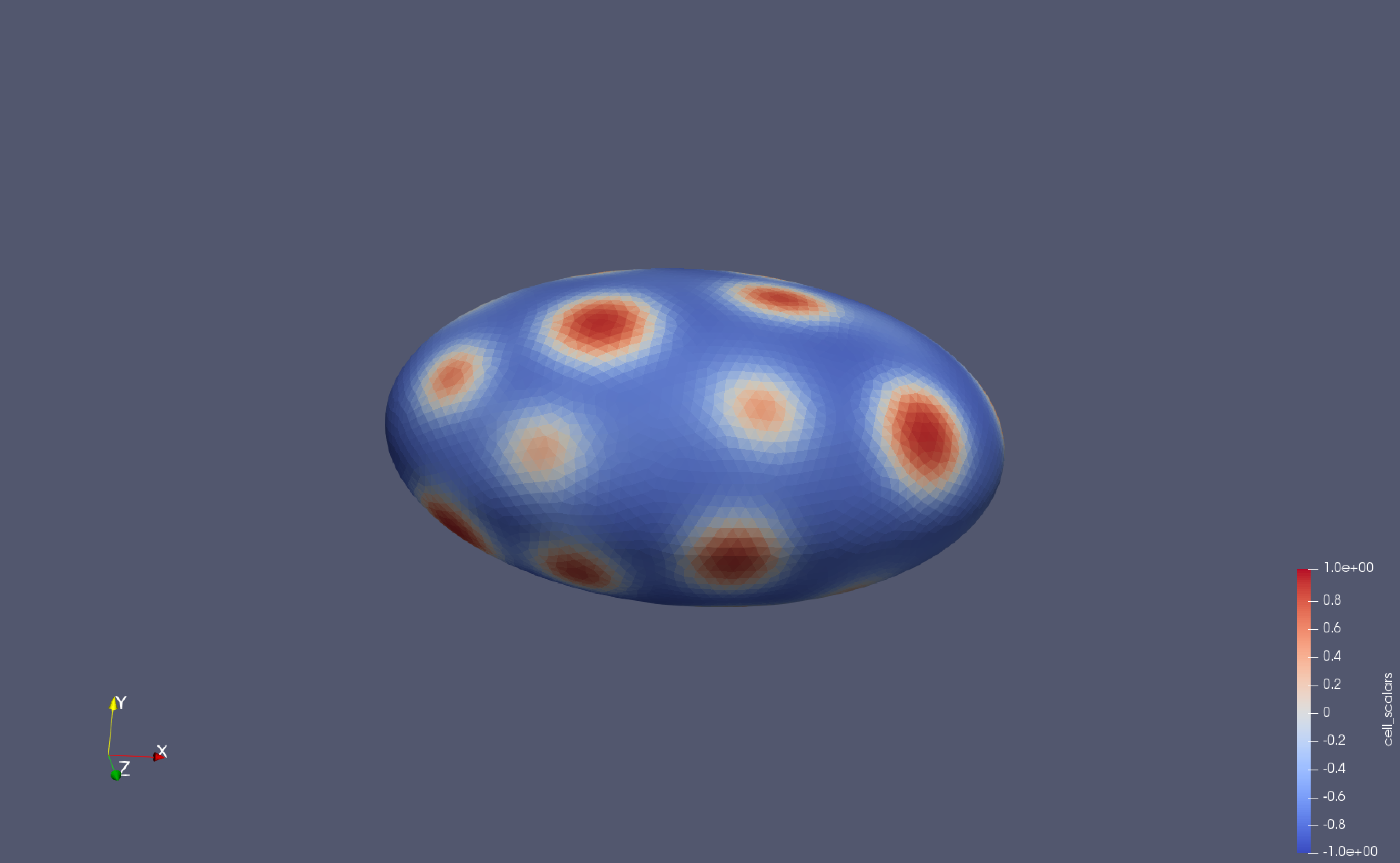,height=1.2in}}
	\caption{Numerical solutions of the concentration $u$ at time $ t = 0.0, 0.1, 0.2$ on ellipsoid $\mathbb{S}_2$ triangulated with 6374 nodes and 12744 triangles.
	}
	\label{fig:ellip_CH}
\end{figure}

\begin{figure}[!ht]
	\centering
	\subfigure[]{\label{fig:rbc_CH_t_0}\epsfig{figure=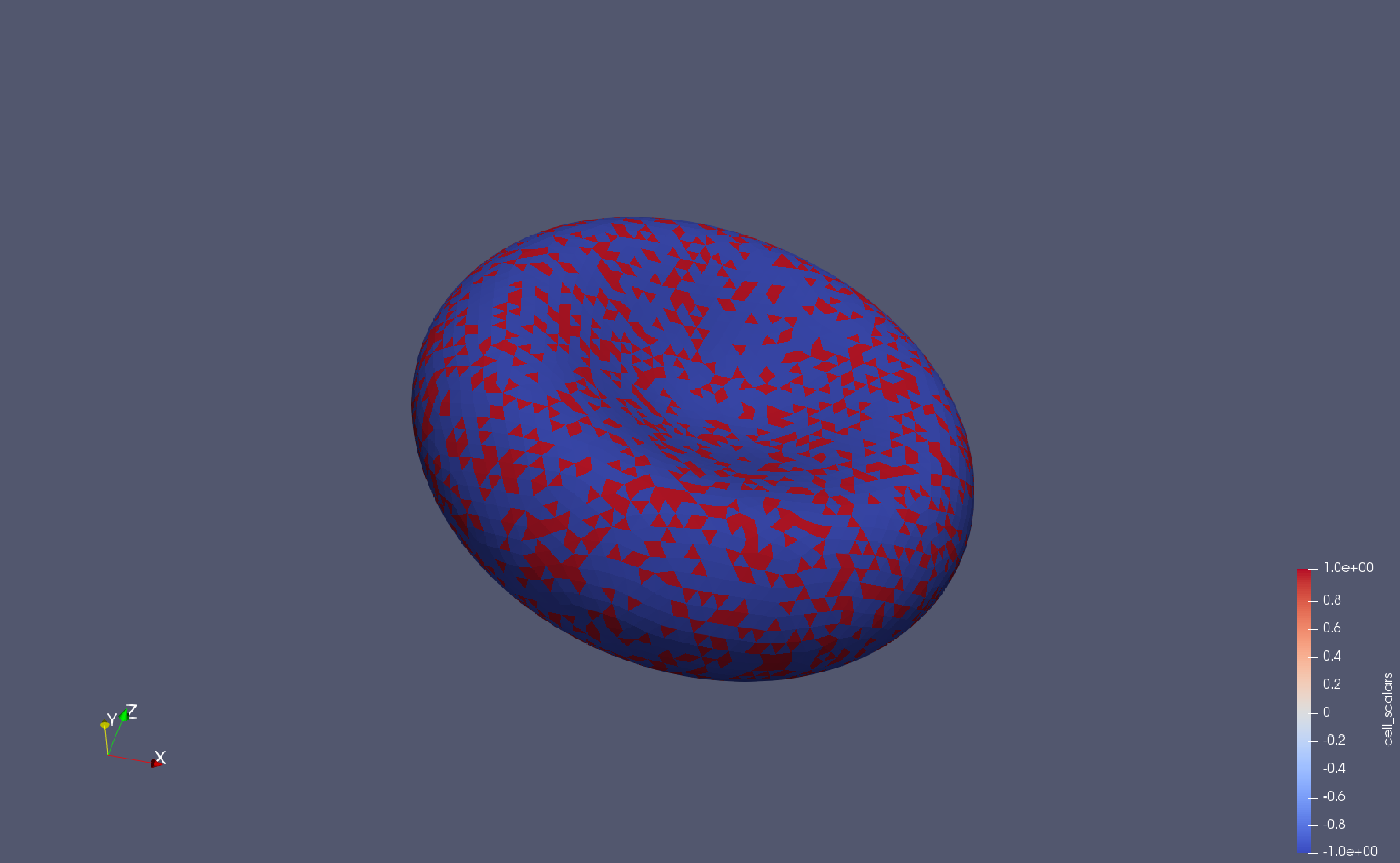,height=1.2in}}
	\subfigure[]{\label{fig:rbc_CH_t_01}\epsfig{figure=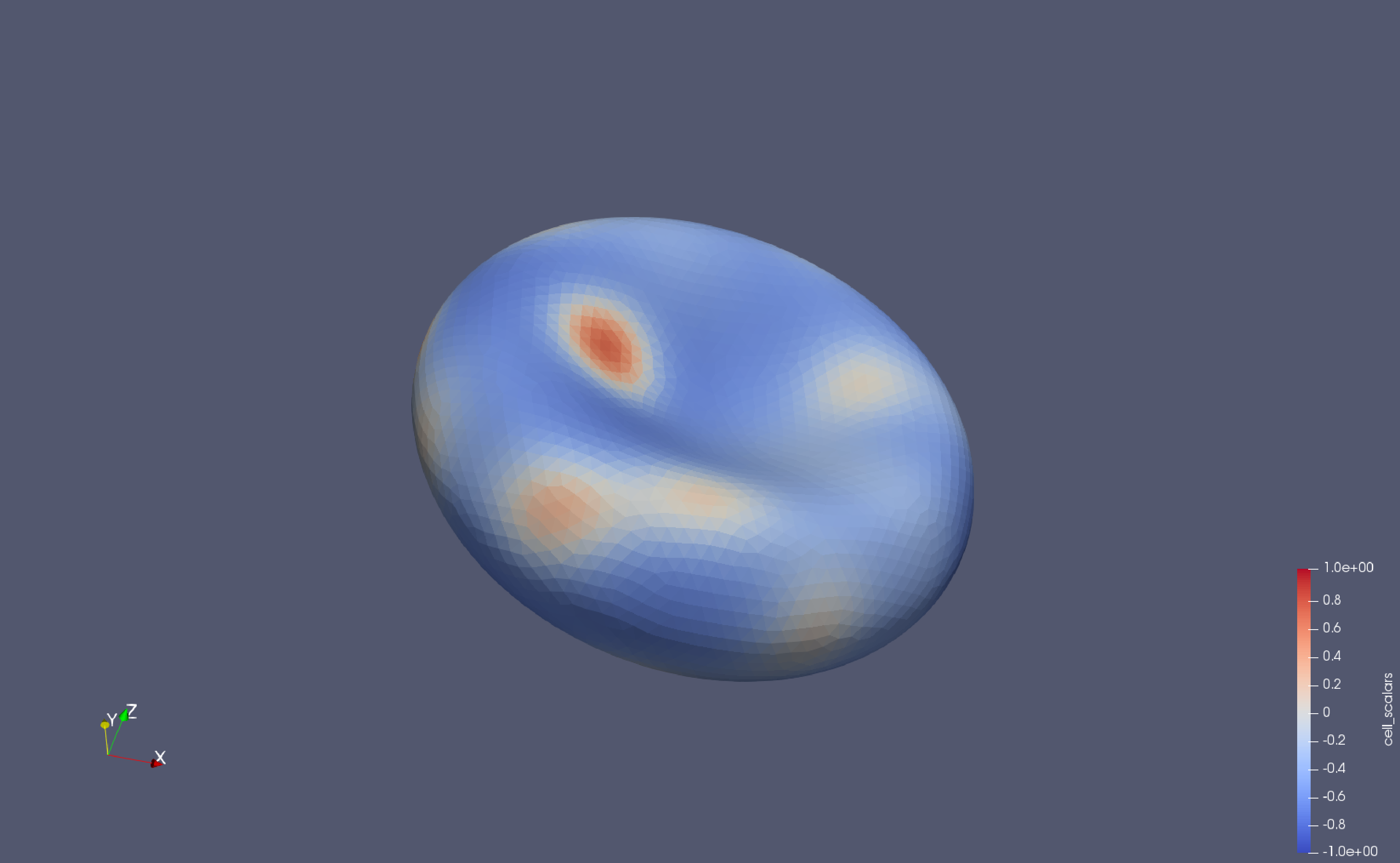,height=1.2in}}
	\subfigure[]{\label{fig:rbc_CH_t_02}\epsfig{figure=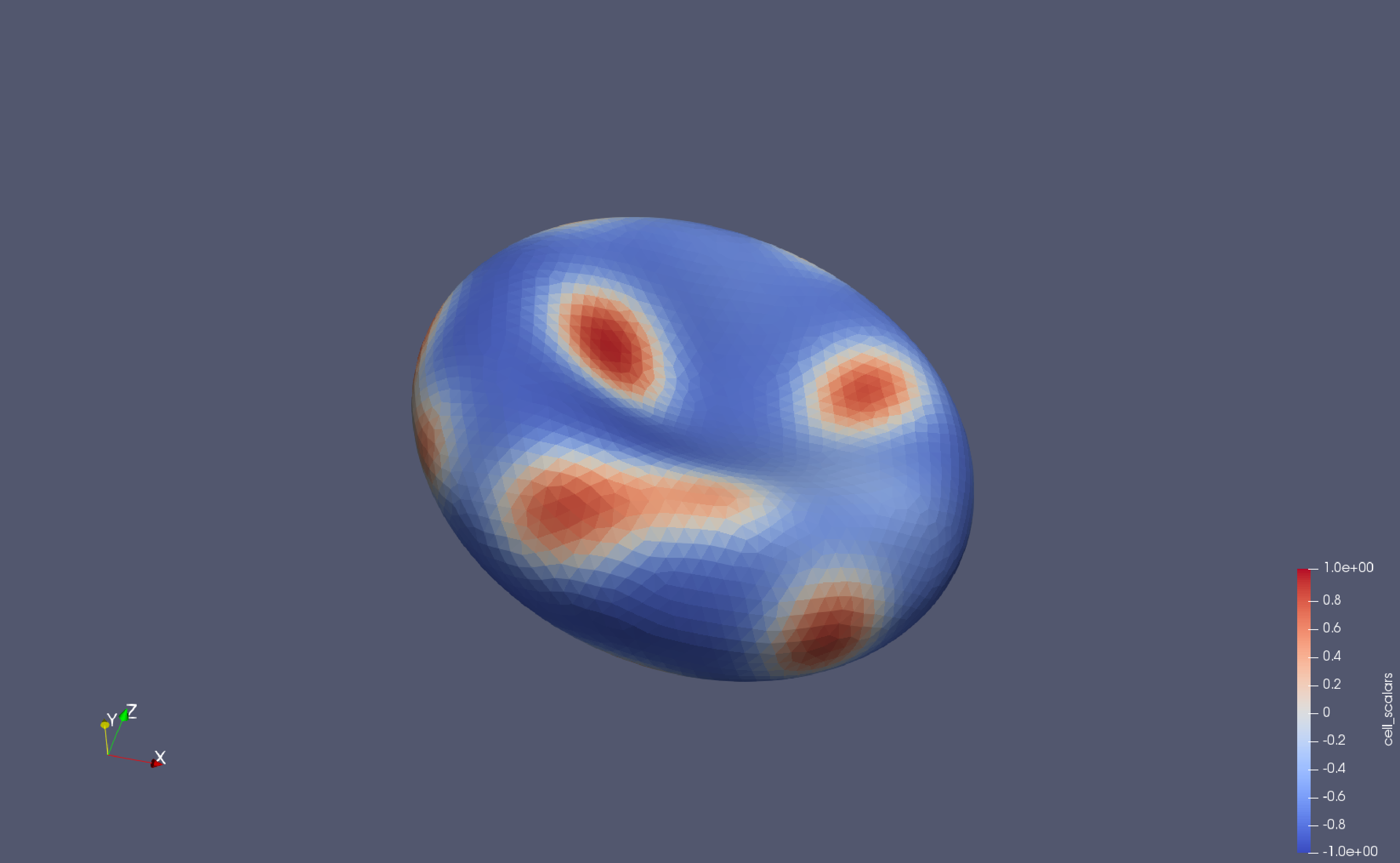,height=1.2in}}
	\subfigure[]{\label{fig:rbc_CH_t_03}\epsfig{figure=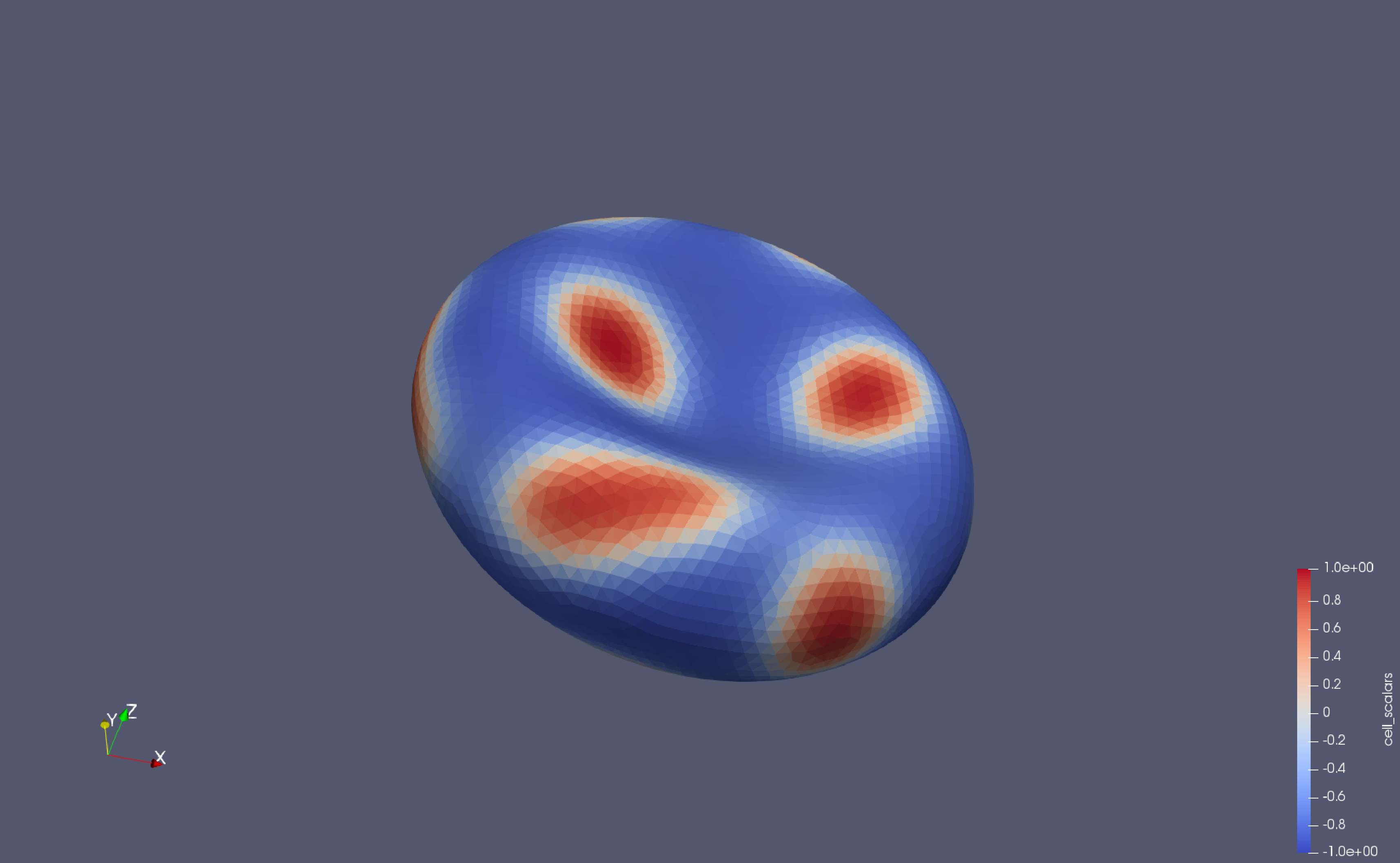,height=1.2in}}
	\caption{Numerical solutions of the concentration $u$ at time $ t = 0.0, 0.1, 0.2, 0.3$ on biconcave surface $\mathbb{S}_3$ with 3904 nodes and 7804 triangles.
	}
	\label{fig:rbc_CH}
\end{figure}

\section{Conclusion}\label{sec:conclusion}
In this paper we introduced LDG schemes for solving initial value problems posed on closed smooth surfaces, which are triangulated by planar triangles. Because of the planar triangle approximation of smooth surfaces, the schemes are second-order accurate.  To further improve order of accuracy of the LDG schemes on surfaces, curved triangles are needed for discretization.
While this paper only implements the second-order accurate schemes, the proposed schemes in principle can be generalized to  meshes consisting of curved elements. However, this would significantly increase programming complexity. We will report in a subsequent paper implementation of these schemes with higher-order accuracy.

Lastly, we used a ${\rm C}_{CFL} \leq 0.1$ in our simulations, which is much smaller than that of LDG for planar domain problems. We will investigate how curvature of the surface affects the CFL condition in the future.

\section{Acknowledgement}

\appendix

\section{Nonlinear Surface Transport }
We consider nonlinear transport equation on a closed manifold
\be\label{eq:nontrans}
\frac{\partial u}{\partial t} +\nabla_{\Gamma}\cdot(\mathbf{f}(u)) =0~.
\ee
The flux $\mathbf{f}$ is some smooth vector field tangent to $\Gamma$. \textbf{We want to show that the DG scheme is stable for solving
Eq.~(\ref{eq:nontrans}) on manifold. The proof follows \cite{CheShu2017,WanShuZha2016}. The main difficulty of the proof is due to the fact that
co-normals of two neighboring elements  are not the same, i.e. $\bn^I \neq -\bn^E$, $\bn^I$ and $\bn^E$ are co-normals defined on the edge shared by the neighboring elements. See also Fig. 1}

The DG method for solving Eqs.~(\ref{eq:nontrans}) is defined by: Find $u_h \in \mathbb{S}_{h,k}$, such that for all test functions $v_h \in \mathbb{S}_{h,k}$,
\be
	\displaystyle{\int_{ \bK_h }} (u_h)_t v_h d\bx -
	\displaystyle{\int_{ \bK_h }} \mathbf{f}(u)\cdot \nabla_{\Gamma_h} v_h d\bx +
	\displaystyle{\int_{ \bK_h }} \left(  \beta_h u_h v_h \right) d\bx
+\displaystyle{\int_{\partial \bK_h}} \reallywidehat{f_h} v_h ds
 = 0 ~,
\label{eq:ldgnt}
\ee
where  $\reallywidehat{f_h}$ is the numerical flux and defined as
\be\label{eq:ntnumflux}
\reallywidehat{f_h}   =  \reallywidehat{f_{K_h}}(u_h^E,u_h^I,\bn^E,\bn^I)= \left\{\mathbf{f}(u_h),\bn_h\right\} + \frac{\alpha}{2} [\![ u_h ]\!]=\frac{1}{2}(\mathbf{f}(u_h^I)\cdot\bn^I-\mathbf{f}(u_h^E)\cdot\bn^E)-\frac{\alpha}{2}(u_h^E-u_h^I),
\ee
with $\alpha = max |\sum_if_i' n_i|$.

\begin{lma}
	If the numerical flux is defined as \eqref{eq:ntnumflux}, it is
	\begin{itemize}
		\item consitstent,
		$ \reallywidehat{f_{K_h}}(u_h,u_h,\bn,\bn) = \mathbf{f}(u_h)\cdot\bn$;
		\item conservative,		 $\reallywidehat{f_{K_h}}(u_h^E,u_h^I,\bn^E,\bn^I) =-\reallywidehat{f_{K_h}}(u_h^I,u_h^E,\bn^I,\bn^E)$;
		\item monotone, non-decrease with $u^I_h$ and non-increase with $u^E_h$ under a suitable $\alpha$.

	\end{itemize}	
\end{lma}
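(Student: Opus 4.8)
The plan is to verify the three claimed properties by direct substitution into the closed form of the flux in \eqref{eq:ntnumflux}, regarding $\reallywidehat{f_{K_h}}$ as a function of the two scalar traces $u_h^E,u_h^I$ and the two conormals $\bn^E,\bn^I$ via $\reallywidehat{f_{K_h}}=\tfrac12\bigl(\mathbf f(u_h^I)\cdot\bn^I-\mathbf f(u_h^E)\cdot\bn^E\bigr)-\tfrac\alpha2\bigl(u_h^E-u_h^I\bigr)$. No deep machinery is needed; each property is a short computation, and the only point demanding attention is the conormal asymmetry $\bn^I\neq-\bn^E$ produced by the planar-triangle approximation of $\Gamma$.

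For \emph{consistency} I would set $u_h^E=u_h^I=u_h$: the penalty term drops, and the average term $\tfrac12\mathbf f(u_h)\cdot(\bn^I-\bn^E)$ collapses to $\mathbf f(u_h)\cdot\bn$ once one invokes the conormal convention relating $\bn^I$ and $\bn^E$ on the shared edge. For \emph{conservativity} I would exchange the two traces together with the two conormals; the advective average $\tfrac12(\mathbf f(u_h^I)\cdot\bn^I-\mathbf f(u_h^E)\cdot\bn^E)$ changes sign and so does $-\tfrac\alpha2(u_h^E-u_h^I)$, which gives $\reallywidehat{f_{K_h}}(u_h^E,u_h^I,\bn^E,\bn^I)=-\reallywidehat{f_{K_h}}(u_h^I,u_h^E,\bn^I,\bn^E)$. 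The place to be careful here is that the conormals must be swapped along with the data, precisely because $\bn^I\neq-\bn^E$; this is why the identity looks different from its planar counterpart.

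For \emph{monotonicity} I would differentiate in each trace: $\partial_{u_h^I}\reallywidehat{f_{K_h}}=\tfrac12\mathbf f'(u_h^I)\cdot\bn^I+\tfrac\alpha2$ and $\partial_{u_h^E}\reallywidehat{f_{K_h}}=-\tfrac12\mathbf f'(u_h^E)\cdot\bn^E-\tfrac\alpha2$. Since $\bn^I$ and $\bn^E$ are unit conormals, $|\mathbf f'(u)\cdot\bn^I|\le\max_u|\sum_i f_i'(u)n_i|=\alpha$ and likewise for $\bn^E$, so the first derivative is nonnegative and the second nonpositive, i.e. $\reallywidehat{f_{K_h}}$ is nondecreasing in the interior trace and nonincreasing in the exterior trace for the stated Rusanov/Lax--Friedrichs-type $\alpha$. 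I expect the main difficulty to amount to nothing more than this bookkeeping: the conormal asymmetry blocks the usual cancellation between the two one-sided advective terms, but since each conormal is still a unit vector the scalar quantity $\alpha$ dominates each one-sided derivative on its own, so the argument goes through essentially as in the flat case.
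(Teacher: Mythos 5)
The paper itself states this lemma and never proves it, so there is no proof of record to compare against; judged on its own terms, your verification of the second and third bullets is sound. Swapping $(u_h^E,\bn^E)$ with $(u_h^I,\bn^I)$ flips the sign of both the average term and the penalty term in \eqref{eq:ntnumflux}, giving conservativity, and the one-sided derivatives $\tfrac12\,\mathbf f'(u_h^I)\cdot\bn^I+\tfrac{\alpha}{2}$ and $-\tfrac12\,\mathbf f'(u_h^E)\cdot\bn^E-\tfrac{\alpha}{2}$ have the claimed signs. Two small points you should make explicit: $\alpha$ must be taken as a maximum over the relevant range of $u$ \emph{and over both conormals of the edge} for both monotonicity inequalities to hold, and the same edge-wise (side-symmetric) choice of $\alpha$ is what makes the conservativity identity exact after the swap.

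The genuine gap is in the consistency bullet. You claim that at coincident traces the term $\tfrac12\,\mathbf f(u_h)\cdot(\bn^I-\bn^E)$ ``collapses to $\mathbf f(u_h)\cdot\bn$ once one invokes the conormal convention relating $\bn^I$ and $\bn^E$.'' There is no such convention in this setting: the entire difficulty emphasized in the appendix (and in Fig.~\ref{fig:surf_tri}) is that on a polyhedral surface $\bn^I\neq-\bn^E$ in general. Note also that substituting the lemma's literal arguments $(u_h,u_h,\bn,\bn)$ into \eqref{eq:ntnumflux} gives $\tfrac12\left(\mathbf f(u_h)\cdot\bn-\mathbf f(u_h)\cdot\bn\right)=0$, not $\mathbf f(u_h)\cdot\bn$. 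What your computation actually establishes is $\reallywidehat{f_{K_h}}(u_h,u_h,-\bn,\bn)=\mathbf f(u_h)\cdot\bn$, i.e.\ consistency of the flux \emph{function} evaluated at opposite conormals; on the triangulated surface itself, at coincident traces the flux equals $\tfrac12\,\mathbf f(u_h)\cdot(\bn^I-\bn^E)$ and is consistent only up to the geometric defect $\tfrac12\,\mathbf f(u_h)\cdot(\bn^I+\bn^E)$, which vanishes exactly only when the two triangles sharing $e$ are coplanar. You should either restate the consistency claim in the former form or quantify this geometric error; as written, the step silently relies on exactly the identity $\bn^E=-\bn^I$ that the paper singles out as failing.
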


If there exists an entropy flux $F_i(u)$ such that
\be
U'(u)f'_i(u) =F'_i, ~ i=1,2,3,
\ee
then the convex function $U(u)$ is called an enetropy function for Eq. \eqref{eq:nontrans}.
If we denote $v(u) = U'(u)$ and
\be
\psi_i(v) = vf_i(u(v))-F_i(u(v)),
\ee
it is easy to check that
\be
\psi_i'(v) = f_i(u(v)).
\ee

\begin{defn}
A numerical flux $\reallywidehat{f}(u_h^E,u_h^I,\bn^E,\bn^I)$ is called entropy stable with respect to some entropy $U$ if it is consistent, single-valued and satisfies the following inequality
\be
(v^E-v^I)\reallywidehat{f}(u_h^E,u_h^I,\bn^E,\bn^I)\le \sum_i(\psi_i(u_h^E)n_i^E+\psi_i(u_h^I)n_i^I)
\ee
\end{defn}
\begin{thm}
	If $U=\frac {u^2} 2$ is an entropy function of \eqref{eq:nontrans}, and \reallywidehat{f} is entropy stable with respect to $U$, then DG method \eqref{eq:ldgnt} is $L^2$ stable in the sense that
	\be
	\displaystyle{\frac d {dt}\left(\sum_{K_h}\int_{ \bK_h }  \frac {(u_h^I)^2  } 2 d\bx\right)}\le 0
	\ee
\end{thm}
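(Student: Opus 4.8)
The plan is to mimic the now-standard entropy-stability argument for discontinuous Galerkin discretisations of scalar conservation laws \cite{CheShu2017,WanShuZha2016}, testing the scheme against the entropy variable and exploiting two surface-specific simplifications: that each $\bK_h$ is a flat triangle (so the surface divergence theorem on $\bK_h$ carries no curvature term and cell-wise integration by parts is exact), and that $\G$ is closed (so no physical boundary terms arise). Since $U(u)=u^{2}/2$, the entropy variable is $v(u)=U'(u)=u$, which is already a legitimate test function in $\mathbb{S}_{h,k}$; no entropy projection is needed, and this is what makes the quadratic-entropy statement clean. First I would set $v_h=u_h$ in \eqref{eq:ldgnt}. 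The time term becomes $\frac{d}{dt}\int_{\bK_h}\tfrac{1}{2}u_h^{2}\,d\bx$, and the stabilisation term contributes $\int_{\bK_h}\beta_h u_h^{2}\,d\bx\ge 0$, which only reinforces dissipation and may be discarded when proving the inequality.

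Next, introduce the flux potential $\boldsymbol{\psi}=(\psi_1,\psi_2,\psi_3)$ with $\psi_i'=f_i$ (equivalently $\psi_i(u)=u f_i(u)-F_i(u)$, using $U'f_i'=F_i'$). On a planar triangle $\bK_h$ one has the pointwise identity $\nabla_{\G_h}\cdot\boldsymbol{\psi}(u_h)=\mathbf{f}(u_h)\cdot\nabla_{\G_h}u_h$ (the tangential projection acts symmetrically on the two factors), and, since $\bK_h$ is flat, $\int_{\bK_h}\nabla_{\G_h}\cdot\boldsymbol{\psi}(u_h)\,d\bx=\int_{\partial\bK_h}\boldsymbol{\psi}(u_h)\cdot\bn_h\,ds$. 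Hence the volume term of \eqref{eq:ldgnt} turns into a conormal boundary flux, and after summing over $\bK_h\in\mathcal{T}_h$ the identity reads
\begin{equation*}
\frac{d}{dt}\sum_{\bK_h}\int_{\bK_h}\tfrac{1}{2}u_h^{2}\,d\bx \;+\; \sum_{\bK_h}\int_{\bK_h}\beta_h u_h^{2}\,d\bx \;=\; \sum_{\bK_h}\int_{\partial\bK_h}\Big(\boldsymbol{\psi}(u_h)\cdot\bn_h-\reallywidehat{f_h}\,u_h\Big)\,ds .
\end{equation*}

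Then I would regroup the right-hand side edge by edge. On an interior edge $e$ shared by $\bK_h^{I}$ (conormal $\bn^{I}$, trace $u_h^{I}$) and $\bK_h^{E}$ (conormal $\bn^{E}$, trace $u_h^{E}$), conservativity of the flux collapses the two contributions into
\begin{equation*}
\int_{e}\Big(\sum_{i}\big(\psi_i(u_h^{I})n_i^{I}+\psi_i(u_h^{E})n_i^{E}\big)+\reallywidehat{f}\,(v^{E}-v^{I})\Big)\,ds ,
\end{equation*}
with $\reallywidehat{f}=\reallywidehat{f_{\bK_h^{I}}}(u_h^{E},u_h^{I},\bn^{E},\bn^{I})$ and $v^{E,I}=u_h^{E,I}$. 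The entropy-stability property of the numerical flux (the inequality in the Definition just above) is precisely the assertion that this integrand has the right sign, so each interior-edge contribution is $\le 0$; combined with $\sum_{\bK_h}\int_{\bK_h}\beta_h u_h^{2}\ge 0$ this gives $\frac{d}{dt}\sum_{\bK_h}\int_{\bK_h}\tfrac12 u_h^{2}\,d\bx\le 0$.

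The main obstacle is exactly the feature flagged before the Theorem: on an interior edge $\bn^{I}\neq-\bn^{E}$, so the terms $\psi_i(u_h^{I})n_i^{I}$ and $\psi_i(u_h^{E})n_i^{E}$ do not telescope into a single jump $[\![\boldsymbol{\psi}(u_h),\bn_h]\!]$ as they would on a flat mesh; the entropy-stability condition must therefore be formulated, as in the preceding Definition, with the two conormals kept separate, and in the supporting step --- verifying that the explicit Lax--Friedrichs-type flux \eqref{eq:ntnumflux} meets that condition --- one must carry $\bn^{I}$ and $\bn^{E}$ distinctly throughout, estimating the penalty $\tfrac{\alpha}{2}[\![u_h]\!]$ against $\sum_i\big(\psi_i(u_h^{E})n_i^{E}+\psi_i(u_h^{I})n_i^{I}\big)$ by a mean-value argument on $\psi_i$ together with $\alpha\ge\max|\sum_i f_i'n_i|$ (here consistency, conservativity and monotonicity from the preceding Lemma enter). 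Everything else is routine bookkeeping.
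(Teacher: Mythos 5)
Your proposal is correct and takes essentially the same route as the paper's own proof: test with $v_h=u_h$, drop the nonnegative $\beta_h$ term, convert the volume term to conormal boundary terms via the flux potential $\psi_i$ (with $\psi_i'=f_i$) and exact integration by parts on the flat triangles, regroup edge-by-edge using conservativity of $\reallywidehat{f}$, and conclude from the entropy-stability condition that each edge contribution $\sum_i\bigl(\psi_i(u_h^I)n_i^I+\psi_i(u_h^E)n_i^E\bigr)+\reallywidehat{f}\,(v^E-v^I)$ is nonpositive. You are in fact slightly more explicit than the paper (which silently discards $\beta_h$ and the conservativity bookkeeping), and you read the entropy-stability Definition exactly as the paper's proof does, i.e.\ as asserting the nonpositivity of that combined edge term.
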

\noindent\textbf{Proof:}
Taking $v_h = u_h$ and summing over all elements yields
\begin{eqnarray}
	&&\displaystyle{\frac d {dt}\left(\sum_{K_h}\int_{ \bK_h }  \frac {(u_h^I)^2  } 2 d\bx\right)} \nonumber\\
	&=&\displaystyle{\sum_{\bK_h}  \int_{ \bK_h }} \mathbf{f}(u)\cdot \nabla_{\Gamma_h} u_h d\bx
-\displaystyle{\sum_{\bK_h}\int_{\partial K_h} \reallywidehat{f_h} u_h^I ds}\nonumber\\
&=& \displaystyle{\sum_{\bK_h}  \int_{ \partial\bK_h }\left(\psi_i(u_h^I)n_i^I  -\reallywidehat{f_h} u_h^I\right) ds}\nonumber\\
&=& \displaystyle{\sum_{e \in \mathcal{E}}}  \int_{e}\left(\psi_i(u_h^I)n_i^I+ \psi_i(u_h^E)n_i^E +\reallywidehat{f_h} (u_h^E-u_h^I)\right) ds \le 0 .
\end{eqnarray}

\textbf{Question: how to design an entropy stable  numerical flux? }
If we use the Lax-Friedrich  flux \eqref{eq:ntnumflux}, I could not prove it is an entropy stable flux with respect to entropy $U = \frac{u^2}2 $ since the conormals of two neighbor elements  are not same.

When $U =\frac{u^2}{2}$, we have  $v=u$,  and $\psi_i =uf_i -F_i$.
\begin{eqnarray}
&&(v_h^E-v_h^I)\reallywidehat{f_{K_h}}(u_h^E,u_h^I,\bn^E,\bn^I)\nonumber\\
&=& (u_h^E-u_h^I)\reallywidehat{f_{K_h}}(u_h^E,u_h^I,\bn^E,\bn^I)\nonumber\\
&=& \frac{1}{2}(u_h^E-u_h^I)(\mathbf{f}(u_h^I)\cdot\bn^I-\mathbf{f}(u_h^E)\cdot\bn^E)-\frac{\alpha}{2}(u_h^E-u_h^I)^2\\
&=& \frac 1 2 (\mathbf{f}(u_h^I)\cdot\bn^Iu_h^E-\mathbf{f}(u_h^I)\cdot\bn^Iu_h^I-\mathbf{f}(u_h^E)\cdot\bn^Eu_h^E+\mathbf{f}(u_h^E)\cdot\bn^Eu_h^I)-\frac{\alpha}{2}(u_h^E-u_h^I)^2\nonumber\\
&=&??
\end{eqnarray}

\end{document}